\DeclareRobustCommand{\format@sec@number}[2]{{\normalfont\upshape#1}#2}
\def\a{\mathbf a}
\def\b{\mathbf b}
\def\R{\mathbb R}
\def\N{{\mathbb N}}
\def\Z{\mathbb Z}
\def\S{\mathbb S}
\def\Q{\mathbb Q}
\def\({\biggl(}
\def\){\biggr)}
\def\<{\mathbf{\langle}}
\def\>{\mathbf{\rangle}}
\def\diff{\text{Diff}}
\newcommand{\Meng}[2]{\left\{#1\mathrel{}\middle|\mathrel{}#2\right\}}
\newcommand{\abs}[1]{\left\lvert#1\right\rvert}
\newcommand{\norm}[1]{\left\lVert#1\right\rVert}
\newcommand{\vertiii}[1]{{\left\vert\kern-0.25ex\left\vert\kern-0.25ex\left\vert #1
		\right\vert\kern-0.25ex\right\vert\kern-0.25ex\right\vert}}
\numberwithin{equation}{section}
\numberwithin{equation}{section}
\newtheorem{theorem}[equation]{Theorem}
\newtheorem{prop}[equation]{Proposition}
\newtheorem{lemma}[equation]{Lemma}
\newtheorem{cor}[equation]{Corollary}
\newtheorem{maintheorem}{Theorem}
\theoremstyle{definition}
\newtheorem{definition}[equation]{Definition}
\theoremstyle{definition}
\theoremstyle{remark}
\theoremstyle{plain}
\newtheorem*{que}{Question}
\title{There is no complete numerical invariant for smooth conjugacy of circle diffeomorphisms}
\author{Philipp Kunde \footnote{University of Hamburg, Department of Mathematics, Bundesstraße 55, 20146 Hamburg}}
\date{}
\begin{document}
	
	\maketitle
	
	\begin{abstract} 
		Classical results by Poincar\'e and Denjoy show that two orientation-preserving $C^2$ diffeomorphisms of the circle are topologically conjugate if and only if they have the same rotation number. We show that there is no possibility of getting such a complete numerical Borel invariant for the conjugacy relation of orientation-preserving circle diffeomorphisms by homeomorphisms with higher degree of regularity. For instance, we consider conjugacy by Hölder homeomorphisms or by $C^k$-diffeomorphisms with $k\in \mathbb{Z}^+ \cup \{\infty\}$. The proof combines techniques from Descriptive Set Theory and a quantitative version of the Approximation by Conjugation method for circle diffeomorphisms.
	\end{abstract}

		\insert\footins{\footnotesize - \\
		\textit{2020 Mathematics Subject classification:} Primary: 37C15; Secondary: 37C05, 37C79, 37E10, 03E15\\
		\textit{Key words: } anti-classification, reduction, rotation number, approximation by conjugation method}

	\section{Introduction}\label{section introduction}
	A fundamental theme in dynamics is the classification of systems up to appropriate equivalence relations. For instance, the equivalence relation of \emph{topological conjugacy} preserves the qualitative behavior of topological dynamical systems. Here, two continuous maps $T:X \to X$ and $S: Y \to Y$ of compact metric spaces $X, Y$ are called topological conjugate (or topologically equivalent) if there is a homeomorphism $h: X \to Y$ such that $S=h \circ T \circ h^{-1}$. Smale's celebrated program proposes to classify topological or smooth dynamical systems up to topological conjugacy.
	
	For the purpose of classification, one seeks \emph{dynamical invariants} that are easy to compute and help determine whether two systems can be equivalent to each other. In the best case scenario, these invariants can be expressed as a single number. While one often can find invariants that are preserved under equivalence, even within specific classes of dynamical systems it turns out to be very hard to find \emph{complete invariants}, that is, invariants that agree for two systems if and only if the systems are equivalent to each other. Indeed, having complete numerical invariants is one of the key notions of classifiability.
	
	One well-studied numerical invariant is the \emph{rotation number} $\tau(f)$ for an orientation-preserving circle homeomorphism $f$ (see Section \ref{subsec:circle} for its definition). We denote the collection of orientation-preserving homeomorphisms of the circle by $\mathcal{H}$. Then the rotation number is an invariant for the equivalence relation on $\mathcal{H}$ of conjugacy by an orientation-preserving homeomorphism. In the converse direction, Poincar\'e showed in 1885 that any $f\in \mathcal{H}$ with an irrational rotation number $\tau(f)$ and a dense orbit is topologically conjugate to $R_{\tau(f)}$ (see \cite[Theorem 11.2.7]{HK}).	Thus, homeomorphisms of the circle with a dense orbit present a success story of a classification by a numerical invariant. Furthermore, Denjoy showed that $C^1$ circle diffeomorphisms with irrational rotation number and derivative of bounded variation have a dense orbit and, hence, are completely classified by their rotation number (see \cite{De} or \cite[Theorem 12.1.1]{HK}).
	
	In this paper, we focus on the conjugacy relation of orientation-preserving circle diffeomorphisms by homeomorphisms with higher degree of regularity. The question of the smoothness of the conjugacy of circle diffeomorphisms to a rotation is a classical problem in dynamical systems. Arnold~\cite{Ar} proved a \emph{local rigidity} result that every analytic circle diffeomorphism with a Diophantine\footnote{A number $\alpha\in \R$ is called Diophantine of class $D(\nu)$ for $\nu \geq 0$, if there exists $C > 0$ such that 
		\[
		\abs{\alpha -\frac{p}{q}} \geq \frac{C}{q^{2+\nu}} \ \text{ for every $p\in \Z$ and $q\in \N$}.
		\]
		A number $\alpha$ is called \emph{Diophantine} if it is in $D(\nu)$ for some $\nu \geq 0$.} rotation number $\tau(f)$, that is sufficiently close to the rigid rotation $R_{\tau(f)}$, is analytically conjugate to $R_{\tau(f)}$. A celebrated \emph{global rigidity} result by M.\ Herman \cite{He} shows that every $C^{\infty}$ (respectively, $C^{\omega}$) diffeomorphism of the circle with Diophantine rotation number $\tau(f)$ is $C^{\infty}$-conjugate (respectively, $C^{\omega}$-conjugate) to $R_{\tau(f)}$. Papers by Yoccoz \cite{Y} and Katznelson-Ornstein \cite{KO} give further insight into the dependence of the smoothness of the conjugacy on the Diophantine properties of the rotation numbers: If a $C^k$ circle diffeomorphism has a rotation number $\tau(f) \in D(\nu)$ with $k >\nu +2$, then $f$ is $C^{k-1-\nu-\varepsilon}$-conjugate to $R_{\tau(f)}$ for every $\varepsilon >0$.
	
	In the negative direction, Arnold provided an example of a real-analytic circle diffeomorphism $f$ that is conjugate to rotation $R_{\tau(f)}$ by a non-differentiable homeomorphism (see \cite{Ar} or \cite[Theorem 12.5.1]{HK}). In \cite[section 12.6]{HK} further constructions of $C^{\infty}$ circle diffeomorphisms with conjugacies of intermediate regularity are presented. For example, for every $r \in \N$ there is a $C^{\infty}$ circle diffeomorphism $f$ that is conjugate to $R_{\tau(f)}$ via a conjugacy that is $C^r$ but not $C^{r+1}$. These constructions are obtained by the \emph{Approximation by Conjugation method} (also called AbC method or Anosov-Katok method) that was introduced in the influential paper \cite{AK}. Here, diffeomorphisms are constructed inductively as limits of conjugates $T_n = H_n \circ R_{\alpha_{n+1}} \circ H^{-1}_n$, where at each step $n$ one updates the conjugation maps $H_n=H_{n-1}\circ h_n$  as well as the numbers $\alpha_{n+1} \in \Q$ close to the previous number $\alpha_n$. We refer to the survey papers \cite{FK} and \cite{Ksurvey} for expositions of the AbC method and its wide range of applications in dynamics. 
	
    Refined versions of the results from \cite[section 12.6]{HK} are obtained in \cite{M} and \cite{KuCircle}, where diffeomorphisms with conjugacies of intermediate regularity and prescribed Liouville\footnote{An irrational number $\alpha$ is called \emph{Liouville} if it is not Diophantine, that is, for every $C>0$ and every $n \in \N$ there are infinitely many pairs $p\in \Z$, $q \in \N$ such that
    	\[
    	0<\abs{\alpha - \frac{p}{q}}<\frac{C}{q^n}.
    	\]}
     rotation number are constructed. One of the key ingredients in these AbC constructions are precise norm estimates on the conjugation maps. In the literature, such versions of the AbC method go by the name of \emph{quantitative constructions} (see e.g. \cite[section 2]{Ksurvey}) which were initiated by B.~Fayad and M.~Saprykina in \cite{FS}.   
    
    All these examples show that the rotation number is not a complete numerical invariant for the conjugacy relation of orientation-preserving circle diffeomorphisms by homeomorphisms with higher degree of regularity. They leave open the possibility of a different complete numerical invariant. We build on the constructions in \cite{M} and \cite{KuCircle} and show that there is no complete numerical Borel invariant for the conjugacy relation of circle diffeomorphisms by homeomorphisms with higher degree of regularity. To state our result, for some degree $(D)$ of regularity from Hölder to $C^{\infty}$ let $\mathcal{D}$ be the collection of orientation-preserving circle homeomorphisms with regularity $(D)$. We say that two circle diffeomorphims $S$ and $T$ are $\mathcal{D}$-conjugate if there is a $h\in \mathcal{D}$ such that $S=h\circ T \circ h^{-1}$. Then we look at the equivalence relation $E_{\mathcal{D}}$ on orientation-preserving $C^{\infty}$ circle diffeomorphisms defined by $SE_{\mathcal{D}}T$ if and only if $S$ and $T$ are $\mathcal{D}$-conjugate. For instance, our result holds for the relation of conjugacy by $C^1$ diffeomorphisms.
	
	\begin{maintheorem} \label{theo:A}
		Let $(D)$ be some degree of regularity from Hölder to $C^{\infty}$ and $\mathcal{D}$ be the collection of orientation-preserving circle homeomorphisms with regularity $(D)$. Then there is no complete numerical Borel invariant for $\mathcal{D}$-conjugacy of orientation-preserving $C^{\infty}$ diffeomorphisms of the circle. 
	\end{maintheorem}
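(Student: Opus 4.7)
The plan is to read Theorem~\ref{theo:A} as a non-smoothness statement in descriptive set theory and to establish it via Borel reducibility. A Borel map $\varphi:\diff^{\infty}(\S^1)\to\R$ is a complete numerical Borel invariant for $E_{\mathcal{D}}$ exactly when $S\,E_{\mathcal{D}}\,T\iff\varphi(S)=\varphi(T)$, i.e.\ exactly when $E_{\mathcal{D}}$ is Borel reducible to equality on $\R$ (\emph{smooth}). By the Harrington--Kechris--Louveau dichotomy, a Borel equivalence relation $E$ is smooth iff $E_0\not\leq_B E$, where $E_0$ denotes eventual equality on $2^{\N}$. Hence it suffices to Borel reduce $E_0$ into $E_{\mathcal{D}}$.

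To produce such a reduction I would run the quantitative Approximation by Conjugation constructions of \cite{M,KuCircle} with a binary parameter at each stage. Fix once and for all a sufficiently fast-approximable Liouville number $\alpha\in\T$ together with rationals $\alpha_n\to\alpha$ calibrated so that conjugacies of any chosen regularity class $(D)$ from Hölder up to $C^{\infty}$ can be controlled. For every $x\in 2^{\N}$ build inductively $T^{x}_n=H^{x}_n\circ R_{\alpha_{n+1}}\circ(H^{x}_n)^{-1}$ with $H^{x}_n=H^{x}_{n-1}\circ h^{x}_n$, where the conjugation $h^{x}_n$ is selected from a two-element family (for instance the identity versus a prescribed AbC building block at the scale dictated by $\alpha_{n+1}$) indexed by the bit $x_n$. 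The quantitative norm estimates of \cite{M,KuCircle} apply uniformly in $x$ and yield $T_x:=\lim_n T^{x}_n\in\diff^{\infty}(\S^1)$ with rotation number $\alpha$; the construction is explicit in $x$, so $x\mapsto T_x$ is Borel.

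The easy direction of the reduction is: if $x\,E_0\,y$ with $x_n=y_n$ for all $n\geq N$, then $\Psi:=H^{x}_N\circ(H^{y}_N)^{-1}$ is a finite composition of smooth circle diffeomorphisms conjugating $T^{y}_m$ to $T^{x}_m$ for every $m\geq N$. Passing to the limit one obtains $\Psi\circ T_y\circ\Psi^{-1}=T_x$, so $T_x$ and $T_y$ are $C^{\infty}$-conjugate, a fortiori $\mathcal{D}$-conjugate. The hard direction, which is the genuine obstacle, is the converse: if $x_n\neq y_n$ for infinitely many $n$, then no $\Phi\in\mathcal{D}$ satisfies $\Phi\circ T_x\circ\Phi^{-1}=T_y$. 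By Denjoy--Poincar\'e each of $T_x,T_y$ is topologically conjugate to $R_\alpha$ via $\Phi_x:=\lim_n H^{x}_n$ and $\Phi_y:=\lim_n H^{y}_n$ respectively, and any hypothetical orientation-preserving conjugacy $\Phi$ of $T_x$ to $T_y$ has to be of the form $\Phi=\Phi_y\circ R_\beta\circ\Phi_x^{-1}$ for some $\beta\in\T$. One then needs to show that for every such $\beta$ the map $\Phi_y\circ R_\beta\circ\Phi_x^{-1}$ fails to belong to $\mathcal{D}$.

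The technical heart is therefore the last step: extracting, from the parameter choices $(\alpha_n)$ and the two-option building blocks $h^{x}_n$, a quantitative obstruction which at each bad index $n$ (where $x_n\neq y_n$) contributes a controlled lower bound to the relevant $\mathcal{D}$-modulus of any candidate $\Phi$, essentially by producing a dynamically located mismatch at scale $\alpha_{n+1}^{-1}$ that a $\mathcal{D}$-regular map cannot smooth out. Summing these contributions over the infinitely many bad indices forces the relevant Hölder seminorm or $C^k$-norm to diverge, independently of $\beta$ and uniformly across the range of $(D)$. This is exactly the type of estimate already encoded in the intermediate-regularity AbC method of \cite{M,KuCircle}, and has only to be upgraded to hold uniformly in the binary parameter $x$. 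Once this divergence is established, $x\mapsto T_x$ witnesses $E_0\leq_B E_{\mathcal{D}}$, so $E_{\mathcal{D}}$ is not smooth and no complete numerical Borel invariant can exist.
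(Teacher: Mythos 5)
Your plan lines up with the paper's strategy at every structural step: invoke the Harrington--Kechris--Louveau dichotomy (the paper actually uses Feldman's Hewitt--Savage argument to prove the relevant direction directly), build a continuous map $\{0,1\}^{\N}\to\mathcal{H}^{\infty}_{\alpha}$ via a parametrized quantitative AbC scheme with a bit read off at every stage, prove one implication by stabilization of the partial conjugacies past the last disagreement, and prove the other by a quantitative Hölder obstruction accumulated over the disagreeing indices. So the overall architecture is sound and matches the paper's Proposition~\ref{prop:Main}.

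Two substantive points of divergence deserve comment. First, your proposed two-element family, ``identity versus a prescribed AbC building block,'' is not what the paper uses: the paper takes $h_{\a,n}\in\{h_{q_n},h_{q_n}^{-1}\}$. This choice matters for two reasons. It makes $\vertiii{h_{\a,n}}_r$ independent of the bit $a_n$, so the convergence in Lemma~\ref{lem:conv} and the continuity of the reduction are genuinely uniform over the parameter space $\{0,1\}^{\N}$; with ``identity versus $h_{q_n}$'' the limits would still exist but the estimates would be asymmetric and the continuity argument would need modification. More importantly, at a disagreeing index $n$ the relevant comparison map becomes $h_{q_n}^{\pm 2}$ with slope $t_n^{-2}$ on a prescribed set, rather than $h_{q_n}^{\pm 1}$ with slope only $t_n^{-1}$. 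The doubled exponent is exactly what makes the final Hölder estimate $\abs{G_{\a,\b}(x)-G_{\a,\b}(y)}/\abs{x-y}^{1/n}\ge q_n$ close; with the single power you would have to recalibrate $t_n$ or $q_n$ to recover the obstruction.

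Second, you are right to flag that \emph{any} candidate conjugacy has the form $\Phi_\beta = H_{\b}\circ R_\beta\circ H_{\a}^{-1}$ and that a priori one must rule out all $\beta\in\T$. The paper's written proof settles only the normalized case $\beta=0$: it appeals to Lemma~\ref{lem:ConjUniq} to single out $G_{\a,\b}=H_{\b}\circ H_{\a}^{-1}$ as the unique conjugacy fixing $0$, and then establishes the Hölder blow-up for that specific map by using points $x',y'$ that lie in $\operatorname{Fix}(h_i)$ for $i>n$, which makes the partial maps $H_{\a,n}$, $H_{\b,n}$ act affinely. For $\beta\neq 0$ one would have to locate analogous pairs after the rotation $R_\beta$, and since the bad sets $K_n$ have width $\sim t_nq_n^{-1}$ rather than $\sim q_n^{-1}$, this is not automatic; in the paper's framework this step is carried by the fact that any two conjugacies of $T_{\a}$ to $T_{\b}$ differ by an element of the centralizer of $T_{\b}$, and the dense set $\{T_{\b}^m\}$ consists of smooth maps so $\Phi_{m\alpha}=T_{\b}^mG_{\a,\b}$ inherits $G_{\a,\b}$'s non-Hölderness. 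For general $\beta$ the argument needs a small additional remark along these lines. So your instinct here is correct, but you have deferred precisely the estimate you would need, and the way you phrase the two-element family would weaken that estimate by a square.

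Finally, a minor point: the paper proves the stronger statement for \emph{every} Liouville $\alpha$, not just for a fixed ``sufficiently fast-approximable'' one, but since any one Liouville number suffices for Theorem~\ref{theo:A} this is a matter of scope and not a gap.
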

    
    In fact, we show for every Liouville number $\alpha$ that there is no complete numerical Borel invariant for $\mathcal{D}$-conjugacy of orientation-preserving $C^{\infty}$ circle diffeomorphisms with rotation number $\alpha$. A key ingredient of our proof is the concept of a \emph{reduction} from the equivalence relation $E_0$ to our conjugacy relations. We explain this concept and further basic tools from descriptive set theory in Section \ref{subsec:Descriptive}. 
    
    In general, the interplay between descriptive set theory and dynamical systems has been fruitful to establish several \emph{anti-classification results} in dynamics which demonstrated in a rigorous way that classification is not possible. A pioneering work \cite{Fe74} by J.~Feldman showed that there cannot exist a complete numerical invariant for measure-theoretic isomorphism of ergodic transformations or even of $K$-automorphisms. In their landmark paper \cite{FRW}, M.~Foreman, D.~Rudolph and B.~Weiss showed that the measure-isomorphism relation for ergodic transformations is not a Borel set when viewed as a subset of $\mathcal{E} \times \mathcal{E}$, where $\mathcal{E}$ denotes the collection of ergodic transformations. Informally speaking, this result says that determining isomorphism between ergodic transformations is inaccessible to countable methods that use a countable amount of information. Recently, Foreman and Weiss \cite{FW3} generalized the aforementioned anti-classification result from \cite{FRW} to the $C^{\infty}$ category by proving that the measure-isomorphism relation among pairs of volume-preserving ergodic $C^{\infty}$-diffeomorphisms of on compact surfaces admitting a non-trivial circle action is not a Borel set with respect to the $C^{\infty}$-topology. In joint work with S.~Banerjee we are even able to show this anti-classification result for real-analytic diffeomorphisms of the $2$-torus \cite{BK2}. Hence, von Neumann's classification problem for the isomorphism relation in ergodic theory is impossible even when restricting to real-analytic diffeomorphisms of $\mathbb{T}^2$. M.~Gerber and the author obtained analogous results for the Kakutani equivalence relation \cite{GeKu}. Recently, Foreman and A.~Gorodetski proved anti-classification results in the context of topological equivalence. For any compact manifold $M$ of dimension at least $2$, Foreman and Gorodetski showed that there is no Borel function from the $C^{\infty}$ diffeomorphisms to the reals
    that is a complete invariant for topological equivalence \cite[Theorem 2]{FG}. Moreover, if dim$(M)\ge5$, they proved that the set of topologically equivalent pairs of diffeomorphisms is not Borel \cite[Theorem 1]{FG}. We refer to the survey article \cite{Fsurvey} by Foreman for an overview of complexity results of structure and classification of dynamical systems. Problem 1 in \cite{Fsurvey} asks if there are complete numerical invariants for orientation-preserving diffeomorphisms of the circle up to conjugation by orientation-preserving diffeomorphisms. Our Main Theorem \ref{theo:A} answers this question in the negative. 
    
    The conjugacy in Theorem~\ref{theo:A} could be either a singular or an absolutely continuous map. In Section~\ref{subsec:Abs} we show that both cases can be realized in the setting of our Main Theorem~\ref{theo:A}. In Section~\ref{sec:Higher} we obtain another generalization of our Main Theorem~\ref{theo:A} to the setting of higher rank actions: we show that there is no complete numerical Borel invariant for $\mathcal{D}$-conjugacy of free $\Z^d$ actions by orientation-preserving $C^{\infty}$ diffeomorphisms of the circle.
    
    Our Main Theorem \ref{theo:A} does not rule out a classification along the lines of the Halmos--von Neumann
    classification of ergodic transformations with discrete spectrum by countable subgroups of the unit circle. To exclude such a classification, it suffices to show that the respective conjugacy relation is not reducible to an $S_{\infty}$-action. Hjorth devised the concept of \emph{turbulence} as a mechanism for showing that an equivalence relation is not reducible to an $S_{\infty}$-action (see \cite[section 5.5]{Fsurvey} for details).
    
    \begin{que}
    	Is the smooth conjugacy relation of smooth circle diffeomorphisms turbulent?
    \end{que}
        
    As mentioned before, the Borel/non-Borel distinction is an important benchmark for classification problems. Hence, it is a natural question if the smooth conjugacy relation of smooth circle diffeomorphisms is Borel or not.

	\begin{que}
		Is the smooth conjugacy relation of smooth circle diffeomorphisms complete analytic?
	\end{que}

\section{Preliminaries}

\subsection{Homeomorphisms of the circle} \label{subsec:circle}
Let $\mathbb{S}^1= \R / \Z$ be the unit circle with rotations $R_{\alpha}:\mathbb{S}^1 \to \mathbb{S}^1$,  $R_{\alpha}(x) = x+\alpha \mod 1$. We denote the collection of orientation-preserving homeomorphisms of $\S^1$ by $\mathcal{H}$. For $x \in \R$ we write $[x]=\pi(x) \in \S^1$, where $\pi: \R \to \S^1$ is the quotient map. Hereby, we can define a \emph{lift} of $f\in \mathcal{H}$ as an increasing continuous function $F: \R \to \R$ with $f \circ \pi = \pi \circ F$, that is, $[F(x)]=f([x])$. Using this terminology we can introduce the rotation number.

\begin{definition} \label{def:RotationNumber}
	Let $f \in \mathcal{H}$ and $F$ be a lift of $f$. Define
	\[
	\tau (F) = \lim_{n \to \infty} \frac{F^n(x)-x}{n}.
	\]
	Then $\tau(f) := [\tau(F)]$ is called the \emph{rotation number} of $f$.
\end{definition}

The definition is justified since $\tau(F)$ exists for all $x \in \R$, is independent of $x$, and we have $[\tau(F_1)]=[\tau(F_2)]$ for all lifts $F_1,F_2$ of $f$ \cite[Proposition 11.1.1]{HK}. Furthermore, one can show that the rotation number is an invariant for conjugacy by orientation-preserving homeomorphisms of the circle, that is, $\tau(f)=\tau (hfh^{-1})$ for all $h\in \mathcal{H}$ \cite[Proposition 11.1.3]{HK}. We also note that $\tau(R_{\alpha})=\alpha$. Hence, the question of the rotation number being a complete numerical invariant can be restated as asking whether $\tau(f)=\alpha$ implies that $f$ is conjugate to $R_{\alpha}$. If $f\in \mathcal{H}$ is topologically conjugate to an irrational rotation, then the conjugacy satisfies the following uniqueness property.

\begin{lemma}\label{lem:ConjRot}
	Let $f \in \mathcal{H}$ be topologically conjugate to an irrational rotation. Then the conjugating homeomorphism $h\in \mathcal{H}$ is unique up to a rotation, that is, if $h_i \circ f = R_{\tau(f)} \circ h_i$ for $i=1,2$, then $h_1 \circ h^{-1}_2$ is a rotation. \\
	In particular, there is a unique conjugating homeomorphism $h\in \mathcal{H}$ satisfying $h(0)=0$.
\end{lemma}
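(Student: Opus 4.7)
The plan is to reduce the lemma to the classical fact that any orientation-preserving homeomorphism of the circle commuting with an irrational rotation must itself be a rotation, and then to apply this to $g := h_1 \circ h_2^{-1}$. Note first that $\tau(f)$ is irrational, since rotation number is invariant under topological conjugacy and $\tau(R_\alpha)=\alpha$. From $h_2 \circ f = R_{\tau(f)} \circ h_2$ I would deduce $f \circ h_2^{-1} = h_2^{-1} \circ R_{\tau(f)}$, so that
\[
g \circ R_{\tau(f)} = h_1 \circ h_2^{-1} \circ R_{\tau(f)} = h_1 \circ f \circ h_2^{-1} = R_{\tau(f)} \circ h_1 \circ h_2^{-1} = R_{\tau(f)} \circ g.
\]
Thus the whole task reduces to showing $g$ is a rotation.

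Next I would prove the key claim: any $g \in \mathcal{H}$ commuting with an irrational rotation $R_\alpha$ is itself a rotation. Fix $x_0 \in \S^1$ and set $c := g(x_0) - x_0 \in \S^1$. Iterating the commutation relation gives $g \circ R_{n\alpha} = R_{n\alpha} \circ g$ for every $n \in \Z$, hence
\[
g(x_0 + n\alpha) = R_{n\alpha}(g(x_0)) = g(x_0) + n\alpha = (x_0 + n\alpha) + c.
\]
So $g$ agrees with $R_c$ on the $R_\alpha$-orbit of $x_0$. Since $\alpha$ is irrational this orbit is dense in $\S^1$, and the continuity of both sides forces $g = R_c$ on all of $\S^1$. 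Applied to the situation above, this shows $h_1 \circ h_2^{-1}$ is a rotation, proving the first assertion.

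For the \emph{in particular} statement I would argue as follows. Given any conjugating homeomorphism $h\in \mathcal{H}$, the composition $\tilde h := R_{-h(0)} \circ h$ still lies in $\mathcal{H}$, still satisfies $\tilde h \circ f = R_{\tau(f)} \circ \tilde h$ because $R_{-h(0)}$ commutes with $R_{\tau(f)}$, and it sends $0$ to $0$. This gives existence. For uniqueness, if $h_1, h_2$ both conjugate $f$ to $R_{\tau(f)}$ and fix $0$, then by the first part $h_1 \circ h_2^{-1} = R_c$ for some $c$, and evaluating at $0$ yields $c = R_c(0) = h_1(h_2^{-1}(0)) = h_1(0) = 0$, so $h_1 = h_2$. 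No step is really an obstacle; the only subtlety is making sure $\tau(f)$ is irrational (used crucially for the density argument), which follows directly from the standing assumption that $f$ is topologically conjugate to an irrational rotation.
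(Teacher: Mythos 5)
Your proof is correct and follows essentially the same route as the paper: reduce to the standard fact that any circle homeomorphism commuting with an irrational rotation is itself a rotation, prove that fact via density of the orbit and continuity, and apply it to $h_1 \circ h_2^{-1}$. The only difference is that you spell out the ``in particular'' part (existence via post-composing with $R_{-h(0)}$ and uniqueness via evaluation at $0$), which the paper leaves implicit; this is a welcome bit of extra care but not a deviation in method.
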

This lemma also shows that the regularity of the conjugacy to the irrational rotation is uniquely determined.

\begin{proof}
	For the reader's convenience we provide a proof of this folklore result (see also \cite[Proposition 3.3.3]{He}). We start with the following observation.\\	
	{\bf Claim: } If a circle homeomorphism $h$ commutes with an irrational rotation $R_{\alpha}$, then $h$ is a rotation. \\
	{\bf Proof: } From $h\circ R_{\alpha} = R_{\alpha} \circ h$ we also get $h \circ R^n_{\alpha} = R^n_{\alpha} \circ h$ for all $n \in \mathbb{Z}$, that is,
	\[
	h(x+n \alpha) = h(x)+n\alpha \ \text{ for all $x \in \mathbb{S}^1$ and $n \in \mathbb{Z}$}.
	\] 
	Since $\alpha$ is irrational, the orbit $\Meng{n\alpha}{n\in \mathbb{Z}}$ lies dense in $\mathbb{S}^1$. Hence, we obtain 
	\[
	h(x+y) = h(x)+y \ \text{ for all $x,y \in \mathbb{S}^1$} 
	\]
	using continuity of $h$. In particular, taking $x=0$ yields
	\[
	h(y)=y + h(0)\ \text{ for all $y \in \mathbb{S}^1$},
	\]
	that is, $h$ is a rotation by $h(0)$. \qed
	
	In the next step, we suppose that 
	\[
	h^{-1}_1 \circ R_{\tau(f)} \circ h_1 = f = h^{-1}_2 \circ R_{\tau(f)} \circ h_2.
	\]
	Hence, $h_1\circ h^{-1}_2$ commutes with $R_{\tau(f)}$. Since $\tau(f)$ is irrational by assumption of the lemma, we can apply the aforementioned Claim and conclude that $h_1\circ h^{-1}_2$ is a rotation.
\end{proof}

\begin{lemma}\label{lem:ConjUniq}
	Let $S , T \in \mathcal{H}$ be topologically conjugate to an irrational rotation $R_{\alpha}$. Then there is a unique $g\in \mathcal{H}$ with $g\circ S \circ g^{-1}=T$ and $g(0)=0$. 
\end{lemma}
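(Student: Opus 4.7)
The plan is to leverage Lemma \ref{lem:ConjRot} twice, once to build the conjugacy $g$ and once to check its uniqueness.

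First, I would construct $g$ explicitly. Since $S, T \in \mathcal{H}$ are topologically conjugate to $R_\alpha$, Lemma \ref{lem:ConjRot} gives unique homeomorphisms $h_S, h_T \in \mathcal{H}$ with $h_S(0)=h_T(0)=0$ and
\[
h_S \circ S \circ h_S^{-1} = R_\alpha = h_T \circ T \circ h_T^{-1}.
\]
I would then set $g := h_T^{-1}\circ h_S$. A direct calculation gives
\[
g\circ S\circ g^{-1} = h_T^{-1}\circ h_S\circ S\circ h_S^{-1}\circ h_T = h_T^{-1}\circ R_\alpha\circ h_T = T,
\]
and $g(0)=h_T^{-1}(h_S(0))=h_T^{-1}(0)=0$ since $h_T^{-1}$ inherits the normalization $h_T^{-1}(0)=0$ from $h_T(0)=0$. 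So existence is immediate.

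For uniqueness, suppose $g_1, g_2 \in \mathcal{H}$ both satisfy $g_i\circ S\circ g_i^{-1}=T$ and $g_i(0)=0$. Combining the two conjugacy relations shows that $g_1\circ g_2^{-1}$ commutes with $T$. Since $T$ is topologically conjugate to the irrational rotation $R_\alpha$ via $h_T$, the map $\tilde h := h_T\circ(g_1\circ g_2^{-1})\circ h_T^{-1}$ commutes with $R_\alpha$. By the Claim inside the proof of Lemma \ref{lem:ConjRot}, $\tilde h$ must be a rotation $R_\beta$, so
\[
g_1\circ g_2^{-1} = h_T^{-1}\circ R_\beta\circ h_T.
\]
Now I would use the normalization: $g_2^{-1}(0)=0$ (since $g_2(0)=0$), hence $(g_1\circ g_2^{-1})(0)=g_1(0)=0$. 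Evaluating the right-hand side at $0$ yields $h_T^{-1}(\beta)=0$, and since $h_T(0)=0$ forces $h_T^{-1}(0)=0$, injectivity of $h_T^{-1}$ gives $\beta=0$. Therefore $g_1\circ g_2^{-1}=\mathrm{id}$, i.e.\ $g_1=g_2$.

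There is no real obstacle here: both parts are short formal manipulations given Lemma \ref{lem:ConjRot}. The one point that requires a sentence of care is the normalization bookkeeping, namely that $h_T(0)=0$ implies $h_T^{-1}(0)=0$, so that the base point survives through the compositions and pins down the rotation parameter $\beta$ to be zero.
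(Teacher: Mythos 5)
Your proof is correct and its uniqueness argument is essentially the paper's: both reduce to the fact that $g_1\circ g_2^{-1}$ is conjugate to a rotation $R_\beta$ via a conjugacy between $T$ (or $S$) and $R_\alpha$, and then use the normalization at $0$ to force $\beta = 0$ (the paper applies Lemma~\ref{lem:ConjRot} directly to the two conjugacies $h\circ g_i$ of $S$ to $R_\alpha$, while you first observe that $g_1\circ g_2^{-1}$ commutes with $T$ and then invoke the Claim, but these are the same computation rearranged). You additionally write out the existence part $g = h_T^{-1}\circ h_S$ explicitly, which the paper leaves implicit; that is a welcome bit of completeness but not a different method.
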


\begin{proof}
We consider any $g_1,g_2 \in \mathcal{H}$ satisfying $g_1(0)=0=g_2(0)$ and
\[
g_1\circ S \circ g^{-1}_1 = T = g_2\circ S \circ g^{-1}_2.
\]	
Furthermore, let $h\in \mathcal{H}$ be a conjugacy between $T$ and $R_{\alpha}$. Then we obtain
\[
h \circ g_1 \circ S \circ (h \circ g_1)^{-1} = h \circ T \circ h^{-1} = R_{\alpha} = h \circ g_2 \circ S \circ (h \circ g_2)^{-1}.
\]
By Lemma \ref{lem:ConjRot} we have
\[
h\circ g_1 \circ (h \circ g_2)^{-1} = R_{\beta}
\] 
for some $\beta \in \mathbb{S}^1$. Hence,
\begin{equation} \label{eq:conj}
	g_1 \circ g^{-1}_2 = h^{-1} \circ R_{\beta} \circ h.
\end{equation}
Then the condition $g_1(0)=0=g_2(0)$ implies $\beta = 0$. Using $\beta=0$ in equation \eqref{eq:conj} gives $g_1=g_2$. 
\end{proof}

\subsection{Diffeomorphisms of the circle}
For $k\in \N \cup \{\infty, \omega\}$ let $\mathcal{H}^k$ be the collection of orientation-preserving $C^k$-diffeomorphisms of $\S^1$. Here, the case $k=0$ corresponds to $\mathcal{H}^0 = \mathcal{H}$. Furthermore, for $0<\beta<1$ we denote by $\mathcal{H}^{k+\beta}$ the collection of orientation-preserving $C^k$-diffeomorphisms of $\S^1$ with $\beta$-Hölder continuous $k$-th derivative. 

\begin{definition}
	Let $k \in \N$. We define the $C^k$ norm on $C^k$ functions $f$ of $\mathbb{S}^1$ as
	\[
	\norm{f}_k = \max_{0 \leq i \leq k} \max_{x \in \mathbb{S}^1} \abs{D^if(x)}. 
	\]
	For $C^k$ diffeomorphisms $f,g$ we also define
	\begin{align*}
		\vertiii{f}_k & = \max \{ \norm{f}_k , \, \norm{f^{-1}}_k \}, \\
		d_k(f,g) & = \max \{ \norm{f-g}_k , \, \norm{f^{-1}-g^{-1}}_k \}.
	\end{align*}
\end{definition}

Obviously, $d_k$ describes a metric on $\diff^k(\mathbb{S})$ measuring the distance between the diffeomorphisms as well as their inverses in the $C^k$-topology. As in the case of a general compact manifold the following definition connects to it.

\begin{definition}
	\begin{enumerate}
		\item A sequence of $C^{\infty}$ diffeomorphisms is called convergent in $\diff^{\infty}$ if it converges in $\diff^k(\mathbb{S}^1)$ for every $k \in \mathbb{N}$.
		\item On $\diff^{\infty}(\mathbb{S}^1)$ we declare the following metric
		\begin{equation*}
			d_{\infty}\left(f,g\right) = \sum^{\infty}_{k=1} \frac{d_k\left(f,g\right)}{2^k \cdot \left(1 + d_k\left(f,g\right)\right)}.
		\end{equation*}
	\end{enumerate}
\end{definition}

The space $\mathcal{H}^{\infty}$ equipped with the metric $d_{\infty}$ is complete and separable. Hence, $\mathcal{H}^{\infty}$ is a Polish space. We refer to \cite[chapter I.2]{He} for a thorough description of spaces of circle diffeomorphisms.

\subsection{Some tools from Descriptive Set Theory} \label{subsec:Descriptive}
The main tool is the idea of a
reduction for equivalence relations. See \cite{Fsurvey} and \cite{Gao} for further information.
\begin{definition}
	Let $X$ and $Y$ be Polish spaces and $E\subseteq X \times X$, $F\subseteq Y \times Y$ be equivalence relations.
	\begin{center}
		A function $f:X\to Y$ \emph{reduces} $E$ to $F$ \\
		if and only if \\
		for all $x_1,x_2\in X$: $x_1Ex_2$ if and only if $f(x_1)Ff(x_2)$. 
	\end{center}
	
	Such a function $f$ is called a Borel (respectively, continuous) reduction
	if $f$ is a Borel (respectively, continuous) function. 
	
	We use the notation $E \precsim_{\mathcal{B}} F$ (respectively, $E \precsim_{C} F$).
\end{definition}

$E$ being reducible to $F$ can be interpreted as saying that $F$
is at least as complicated as $E$.

In this work, we want to exclude the existence of complete numerical invariants. Thus, we consider the \emph{equality equivalence relation} $=_Y\subseteq Y \times Y$ on a Polish space $Y$. Since for every Polish space $Y$ there is a Borel injection $g : Y \to \R\setminus \Q$, we can extend a Borel reduction $f$ of any equivalence relation $E \subseteq X \times X$ to $(Y,=_Y)$ to a Borel reduction $g \circ f$ of $(X,E)$ to $(\R,=_R)$. Thus, we can assume that Borel reductions to any $=_Y$ can be changed to Borel reductions to equality on the real numbers.

In the next step, we define the equivalence relation $E_0$ which is an important tool to exclude the existence of complete numerical invariants.
 
\begin{definition}
	Let $E_0$ be the equivalence relation on $\{0,1\}^{\N}$ defined by setting 
	\[
	(a_n)_{n\in \N} E_0 (b_n)_{n\in \N} \text{ if and only if there is $N\in \N$ such that $a_m = b_m$ for all $m>N$.} 
	\]
\end{definition}

The significance of equivalence relation $E_0$ for excluding complete numerical invariants comes from the so-called \emph{Harrington-Kechris-Louveau dichotomy} (sometimes also refered to as Glimm-Effros dichotomy). It states that for any Borel equivalence relation $E$ on a Polish space $X$ exactly one of the following alternatives holds: either $E \precsim_{\mathcal{B}} =_{\R}$ or $E_0 \precsim_{\mathcal{B}} E$.

To exclude the existence of complete numerical invariants, we use the following direction from the Harrington-Kechris-Louveau dichotomy (see \cite[Theorem 36]{Fsurvey} or \cite[Proposition 6.1.7]{Gao}).

\begin{theorem} \label{thm:Crit}
	Suppose that $E$ is an equivalence relation on an uncountable Polish space $X$ and $E_0 \precsim_{\mathcal{B}} E$. Then $E\not\precsim_{\mathcal{B}} =_{\R}$.
\end{theorem}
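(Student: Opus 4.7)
The plan is to argue by contradiction using the transitivity of Borel reducibility: if $g:X \to \R$ were a Borel reduction of $E$ to $=_{\R}$ and $h:\{0,1\}^{\N} \to X$ were a Borel reduction witnessing $E_0 \precsim_{\mathcal{B}} E$, then $g \circ h$ would be a Borel reduction of $E_0$ to $=_{\R}$. Hence it is enough to prove the single statement $E_0 \not\precsim_{\mathcal{B}} =_{\R}$. The uncountability of $X$ only enters in the background, ensuring that the reduction has room to land.

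The structural input I would extract first is \emph{generic ergodicity} of $E_0$: every $E_0$-invariant Borel subset of $\{0,1\}^{\N}$ is either meager or comeager. The key observation is that $E_0$ is the orbit equivalence relation of the countable group $G = \bigoplus_{\N} \Z/2\Z$ acting on $\{0,1\}^{\N}$ by coordinatewise addition, and this action is by homeomorphisms with every orbit dense. Concretely, for any two basic open cylinders $[s], [t]$ of equal depth, some $g \in G$ supported on the first $|s|$ coordinates satisfies $g \cdot [s] = [t]$; so a non-meager $E_0$-invariant Borel set, which by the Baire property is comeager in some cylinder, is automatically comeager in every cylinder of that depth, and therefore comeager in $\{0,1\}^{\N}$.

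Now suppose for contradiction that $f: \{0,1\}^{\N} \to \R$ is a Borel reduction of $E_0$ to $=_{\R}$. Then $f$ is $E_0$-invariant, and for each $q \in \Q$ the Borel set $A_q = \{x : f(x) \leq q\}$ is $E_0$-invariant, hence meager or comeager by generic ergodicity. Since $\bigcup_q A_q = \{0,1\}^{\N}$ and $\bigcap_q A_q = \emptyset$, the real number
\[
r = \inf \{q \in \Q : A_q \text{ is comeager}\}
\]
is well defined. A routine bookkeeping argument shows that $\{x : r - 1/n \leq f(x) \leq r + 1/n\}$ is comeager for every $n$, so intersecting over $n$ gives that $f^{-1}(\{r\})$ is comeager in $\{0,1\}^{\N}$. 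But $f^{-1}(\{r\})$ is either empty or, since $f$ is a reduction, a single $E_0$-class, and every $E_0$-class is countable as the orbit of the countable group $G$. A countable (or empty) subset of the uncountable Polish space $\{0,1\}^{\N}$ cannot be comeager, yielding the contradiction.

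The only step I expect to need care is generic ergodicity of $E_0$; once that is in place the rest is bookkeeping around the function $q \mapsto A_q$. A measure-theoretic alternative would replace generic ergodicity with ergodicity of the tail $\sigma$-algebra under the fair-coin product measure (Kolmogorov $0$-$1$ law) and conclude that $f$ is almost surely constant, contradicting that $E_0$-classes are null; I prefer the Baire category version since it avoids introducing any measure on $\{0,1\}^{\N}$.
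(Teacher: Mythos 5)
Your proof is correct, and it takes a genuinely different route from the paper's. The paper reproduces Feldman's argument: start from the composed reduction $\varphi\circ\psi:\{0,1\}^{\N}\to\R$, observe that preimages of Borel sets are $E_0$-invariant, equip $\{0,1\}^{\N}$ with the fair-coin Bernoulli measure, and invoke the Hewitt--Savage zero-one law to conclude $\varphi\circ\psi$ is a.e.\ constant; since $E_0$-classes are countable (hence null), uncountably many points share one value, contradicting the reduction. You use the Baire-category counterpart: generic ergodicity of $E_0$ in place of the zero-one law, the cut $r=\inf\{q:A_q\text{ comeager}\}$ in place of ``a.e.\ constant,'' and ``a countable set cannot be comeager'' in place of ``an $E_0$-class is null.'' The two arguments are formally parallel, and your closing remark correctly identifies the measure version as the other branch of the same dichotomy (the paper uses Hewitt--Savage for exchangeability; Kolmogorov's tail $0$--$1$ law would serve equally, since $E_0$-invariant Borel sets are both exchangeable and tail). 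What your version buys is what you say: no auxiliary measure, only topology and the Baire property, which for a purely descriptive-set-theoretic statement is arguably the more canonical route and is the form of the result usually packaged as ``$E_0$ is generically ergodic.'' What the paper's version buys is brevity (the Hewitt--Savage law does all the work in one line) and a direct citation to Feldman's original 1974 argument, which the author explicitly wants to reproduce for historical reasons. One small expository note on the transitivity step you lead with: the paper phrases things directly in terms of ``no complete numerical Borel invariant'' rather than $E\not\precsim_{\mathcal B}=_\R$, but these are the same assertion, and the composition $\varphi\circ\psi$ the paper forms is exactly your $g\circ h$; there is no substantive difference there.
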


\begin{proof}
	For the reader's convenience we include Feldman's argument from \cite{Fe74}.
	
    Since $E_0 \precsim_{\mathcal{B}} E$, there is a Borel map $\psi:\{0,1\}^{\mathbb{N}}\to X$
    such that for $(a_{n})_{n\in\mathbb{N}},(b_{n})_{n\in\mathbb{N}}\in\{0,1\}^{\mathbb{N}},$
    \begin{equation}
    	\psi\left((a_{n})_{n\in\mathbb{N}}\right)E\psi\left((b_{n})_{n\in\mathbb{N}}\right)\text{ if and only if }a_{n}=b_{n}\text{ for all but at most finitely many  }n.\label{eq:equivalence}
    \end{equation}
    We want to show that there does not exist a complete numerical Borel invariant for $E$, that
    is, there does not exist a Borel function $\varphi:X\to\mathbb{R}$
    such that $xEy$ if and only if $\varphi(x)=\varphi(y)$.
    
    Suppose such a Borel function $\varphi$
    exists. Then for $(a_{n})_{n\in\mathbb{N}},(b_{n})_{n\in\mathbb{N}}\in\{0,1\}^{\mathbb{N}}$,
    $\varphi\circ\psi\left((a_{n})_{n\in\mathbb{N}}\right)=\varphi\circ\psi\left((b_{n})_{n\in\mathbb{N}}\right)$
    if and only if $a_{n}=b_{n}$ for all but at most finitely many $n$.
    Thus, for any Borel set $B\subset\mathbb{R}$, $(\varphi\circ\psi)^{-1}(B)$
    is invariant under maps that consist of a permutation of finitely
    many terms in the elements of $(\varphi\circ\psi)^{-1}(B)$. If we
    endow $\{0,1\}^{\mathbb{N}}$ with the $(\frac{1}{2},\frac{1}{2})$-Bernoulli measure, then by the Hewitt-Savage zero-one law, any such
    set $(\varphi\circ\psi)^{-1}(B)$ has measure $0$ or $1$. Therefore
    $\varphi\circ\psi$ is almost everywhere constant on $\{0,1\}^{\mathbb{N}}$.
    Hence, there are uncountably many elements of $\{0,1\}^{\mathbb{N}}$
    that are mapped to the same element of $\mathbb{R}.$ This contradicts
    \eqref{eq:equivalence}. 
\end{proof}

\section{Strategy of proof} \label{sec:strategy}

Let $\mathcal{H}^{\infty}_{\alpha}$ be the collection of orientation-preserving $C^{\infty}$-diffeomorphisms with rotation number $\alpha \in \mathbb{S}^1$. As we show below, our Main Theorem \ref{theo:A} follows from the subsequent stronger Proposition which is proved in Section \ref{sec:ProofProp}.

\begin{prop} \label{prop:Main}
	Let  $\alpha \in \mathbb{S}^1$ be a Liouville number. There is a  continuous one-to-one map 
	\[
	\Psi: \{0,1\}^{\N} \to \mathcal{H}^{\infty}_{\alpha}
	\]
	such that for any two sequences $\a=(a_n)_{n\in \N}$ and $\b =(b_n)_{n\in \N}$ the following properties hold:
	\begin{enumerate}
		\item[(R1)] If there is $N \in \N$ such that $a_n =b_n$ for every $n \geq N$, then the $C^{\infty}$-diffeomorphisms $\Psi(\a)$ and $\Psi(\b)$ are $C^{\infty}$-conjugate.
		\item[(R2)] If there are infinitely many $n \in \N$ with $a_n \neq b_n$, then the $C^{\infty}$-diffeomorphisms $\Psi(\a)$ and $\Psi(\b)$ are not Hölder-conjugate.
	\end{enumerate}
\end{prop}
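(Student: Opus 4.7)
The plan is to implement a parametrized quantitative AbC construction in which, at stage $n$, the bit $a_n$ chooses between two building blocks $h_n^{(0)}, h_n^{(1)}\in \mathcal{H}^\infty$, each commuting with $R_{p_n/q_n}$ so that the approximation $T_n^{\a} := H_n^{\a} R_{p_n/q_n} (H_n^{\a})^{-1}$ with $H_n^{\a} = h_1^{(a_1)}\circ\cdots\circ h_n^{(a_n)}$ is insensitive to the choices $a_{n+1},a_{n+2},\dots$. Following the method of \cite{M} and \cite{KuCircle} and using that $\alpha$ is Liouville, I would choose rationals $p_n/q_n\to\alpha$ so rapidly (independent of $\a$) that, for each $n$ and for either value of the bit, the inductive estimate $d_\infty(T_{n+1}^{\a}, T_n^{\a}) < 2^{-n}$ holds. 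This yields a $C^\infty$-limit $\Psi(\a) := \lim_n T_n^{\a}$ with rotation number $\alpha$. The blocks $h_n^{(i)}$ will be designed as small perturbations of the identity, supported on fundamental domains for $R_{p_n/q_n}$, so that $(\Psi(\a))_{\a\in\{0,1\}^\N}$ embeds continuously into $\mathcal{H}^\infty_\alpha$. Continuity of $\Psi$ follows because agreement of $\a^{(k)}$ and $\a$ on the first $N$ coordinates forces $T_N^{\a^{(k)}}=T_N^{\a}$, and the tail estimates give convergence in every $C^r$-norm; injectivity is secured by choosing $h_n^{(0)},h_n^{(1)}$ so that one specific local invariant (for instance, the position of a distinguished critical point of the limit conjugacy) reads off $\a$.

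\textbf{Proof of (R1).} If $a_n=b_n$ for all $n\geq N$, I would exploit the group structure of the construction. Writing $G_n := h_N^{(a_N)}\circ\cdots\circ h_n^{(a_n)}$, which is identical for $\a$ and $\b$, one has
\[
T_n^{\a} = H_{N-1}^{\a}\circ G_n\circ R_{p_{n+1}/q_{n+1}}\circ G_n^{-1}\circ (H_{N-1}^{\a})^{-1},
\]
and the inner limit $T_\infty := \lim_n G_n R_{p_{n+1}/q_{n+1}} G_n^{-1}$ exists in $\mathcal{H}^\infty$ by the same estimates. Consequently $\Psi(\a)=H_{N-1}^{\a}\circ T_\infty\circ (H_{N-1}^{\a})^{-1}$ and $\Psi(\b)=H_{N-1}^{\b}\circ T_\infty\circ (H_{N-1}^{\b})^{-1}$, so that the finite composition $H_{N-1}^{\a}\circ (H_{N-1}^{\b})^{-1}\in \mathcal{H}^\infty$ realizes a $C^\infty$-conjugacy between them.

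\textbf{Proof of (R2).} Here I would use Lemmas \ref{lem:ConjRot} and \ref{lem:ConjUniq}. Let $\phi_{\a}:=\lim_n H_n^{\a}\in \mathcal{H}$, which exists as a (only $C^0$) conjugacy $\Psi(\a)=\phi_{\a}\circ R_\alpha\circ \phi_{\a}^{-1}$. If $h\in \mathcal{H}$ satisfied $h\circ\Psi(\a)\circ h^{-1}=\Psi(\b)$, then $\phi_\b^{-1}\circ h\circ \phi_\a$ would commute with $R_\alpha$, hence equal some $R_\beta$ by the Claim in the proof of Lemma \ref{lem:ConjRot}; thus $h=\phi_\b\circ R_\beta\circ \phi_\a^{-1}$. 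So it suffices to show that for every $\beta\in\S^1$ the map $\phi_\b\circ R_\beta\circ \phi_\a^{-1}$ fails to be Hölder. The plan is to build the blocks so that each $h_n^{(i)}$ creates a controlled local distortion of $\phi_\a$ at a bit-dependent marker position $x_n^{(i)}$ inside a fundamental domain of $R_{p_n/q_n}$, with distortion strength calibrated against $|q_n\alpha-p_n|$. A direct quantitative comparison, executed at every level $n$ for which $a_n\neq b_n$, then shows that the candidate conjugacy $\phi_\b\circ R_\beta\circ \phi_\a^{-1}$ must exhibit Hölder exponents tending to $0$ at a sequence of points accumulating on $\S^1$, independently of the fixed rotation parameter $\beta$. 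Since there are infinitely many such $n$, no single global Hölder exponent can survive.

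\textbf{Main obstacle.} The hard part is unquestionably the quantitative calibration underpinning (R2): the building blocks $h_n^{(i)}$ and the denominators $q_n$ must be tuned so that (i) either choice of bit yields convergence in $\mathcal{H}^\infty$ with rotation number exactly $\alpha$; (ii) the local marker distortions are preserved in the limit $\phi_\a$ and cannot be flattened by any rigid rotation $R_\beta$; and (iii) infinitely many bit-differences propagate to destroy every global Hölder modulus. Balancing (i), which demands very small perturbations, against (ii)–(iii), which require perturbations strong enough to leave a rigid dynamical trace in the topological conjugacy, is exactly the point where the quantitative AbC estimates of \cite{FS}, \cite{M}, and \cite{KuCircle} must be imported and refined.
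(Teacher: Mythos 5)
Your skeleton aligns with the paper's actual strategy: an AbC tower whose $n$-th block depends only on $a_n$ and commutes with $R_{1/q_n}$, Liouville rationals chosen fast enough to guarantee $\mathcal H^\infty$-convergence and rotation number $\alpha$ uniformly in $\a$, continuity of $\Psi$ via the ``agreement on the first $N$ coordinates'' argument, and (R1) by tail cancellation. In particular your (R1) argument is essentially the paper's: the paper notes directly that $H_{\b,n}\circ H^{-1}_{\a,n}=H_{\b,N}\circ H^{-1}_{\a,N}$ for $n\ge N$, so the limit conjugacy $G_{\a,\b}=H_\b\circ H_\a^{-1}$ equals the fixed finite composition $H_{\b,N}\circ H_{\a,N}^{-1}\in\mathcal H^\infty$; your ``inner limit $T_\infty$'' formulation is a slightly clumsier route to the same place and is fine. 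You also correctly observe, via Lemma~\ref{lem:ConjRot}, that every candidate conjugacy has the form $\phi_\b\circ R_\beta\circ\phi_\a^{-1}$ — this is a real subtlety that the paper handles only implicitly by appealing to the unique normalized conjugacy $G_{\a,\b}(0)=0$.

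The genuine gap is (R2), and it is exactly the part you defer to the ``Main obstacle'' paragraph: you assert that ``a direct quantitative comparison\dots shows that the candidate conjugacy must exhibit Hölder exponents tending to $0$,'' but you provide no mechanism and no construction that produces it. The paper's proof is this mechanism. Concretely, it takes the two building blocks at level $n$ to be $h_{q_n}$ and $h_{q_n}^{-1}$ (a single explicit piecewise-affine-plus-smoothing map $\hat h_{t_n}$, lifted $q_n$-cyclically, and its inverse), with $t_n=q_n^{1-2n}$. The crucial design consequence is that whenever $a_n\ne b_n$ the stage-$n$ mismatch $h_{\b,n}\circ h^{-1}_{\a,n}$ equals $h_{q_n}^{\pm 2}$, which has slope exactly $t_n^{-2}$ on designated intervals $K_n$; unrelated bit-blocks as in your setup do not obviously produce this multiplicative $t_n^{-2}$ growth. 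The paper then chooses a nested sequence of tiny intervals $\tilde K_n\subset K_n$ with endpoints in $(q_{n+1}^{-1}\Z)/\Z$, so these endpoints are fixed by every later block; this ensures $H_\a$ is affine with slope $\prod_{i\le n}t_i$ on $\tilde K_n$ and $G_{\a,\b}=H_\b\circ H_\a^{-1}$ is affine with slope $\prod_{i\in\mathcal N,\,i\le n}t_i^{-2}$ on $H_{\a,n}(\tilde K_n)$. Combining these with $|\tilde K_n|\gtrsim t_nq_n^{-1}$ and the growth condition $q_{n+1}\ge q_n^{2n}$ yields
\[
\frac{|G_{\a,\b}(x_n)-G_{\a,\b}(y_n)|}{|x_n-y_n|^{1/n}}\ \ge\ q_n\ \longrightarrow\ \infty ,
\]
which kills every Hölder exponent. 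None of the ingredients of this chain — the explicit $\hat h_t$ with its steep/shallow regions, the choice $h_{q_n}^{\pm1}$, the scale $t_n=q_n^{1-2n}$, the nesting of marker intervals, the endpoint-fixing under later blocks, and the final telescoping inequality — appears in your proposal, and each is load-bearing. Until those are supplied, the argument for (R2) is a plan, not a proof.

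A minor additional caveat: having correctly reduced (R2) to showing that $\phi_\b\circ R_\beta\circ\phi_\a^{-1}$ is non-Hölder for every $\beta$, you then treat $\beta$ as harmless (``independently of the fixed rotation parameter $\beta$''). This is not automatic: inserting $R_\beta$ shifts the marker intervals before applying $\phi_\b$, and the distortion of $\phi_\b$ at a shifted location is not controlled by the construction as stated. This needs either a normalization argument (e.g.\ reducing to the case $\beta=0$ via Lemma~\ref{lem:ConjUniq}) or an estimate that is uniform over translates; raising the issue without resolving it leaves a hole.
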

	
	Using the notions from descriptive set theory in Section \ref{subsec:Descriptive}, Proposition \ref{prop:Main} implies the following statement.
	
	\begin{cor}
		Let $\alpha \in \mathbb{S}^1$ be a Liouville number and $\mathcal{D}$ be the collection of orientation-preserving circle homeomorphisms with regularity $(D)$, where $(D)$ could be any degree of regularity from Hölder to $C^{\infty}$. Then there is a continuous reduction from $E_0$ to the $\mathcal{D}$-conjugacy relation of diffeomorphisms in $\mathcal{H}^{\infty}_{\alpha}$. 
	\end{cor}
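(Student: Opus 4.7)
The plan is to take the map $\Psi$ already furnished by Proposition~\ref{prop:Main} and verify that it is in fact a continuous reduction from $E_0$ to the $\mathcal{D}$-conjugacy relation. Continuity of $\Psi$ is given, so only the equivalence condition needs checking, and it will follow directly from the fact that the regularity classes form a chain: any $(D)$ between Hölder and $C^{\infty}$ satisfies $\mathcal{H}^{\infty}\subset\mathcal{D}\subset\mathcal{H}^{\beta}$ for some $0<\beta<1$ (the Hölder exponent associated with $(D)$, or the weakest Hölder class under consideration in $(D)$).

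Concretely, I would argue as follows. For the forward direction, suppose $\a\, E_0\, \b$. By property (R1) of Proposition~\ref{prop:Main}, $\Psi(\a)$ and $\Psi(\b)$ are $C^{\infty}$-conjugate, so the conjugating homeomorphism lies in $\mathcal{H}^{\infty}\subset\mathcal{D}$; hence $\Psi(\a)$ and $\Psi(\b)$ are $\mathcal{D}$-conjugate. For the converse, argue by contraposition: if $\neg(\a\, E_0\, \b)$, then infinitely many coordinates differ, and by (R2) the diffeomorphisms $\Psi(\a)$ and $\Psi(\b)$ are not Hölder-conjugate. Since every $h\in\mathcal{D}$ is Hölder, any $\mathcal{D}$-conjugacy would also be a Hölder-conjugacy, so $\Psi(\a)$ and $\Psi(\b)$ cannot be $\mathcal{D}$-conjugate either.

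There is essentially no obstacle here — the work is entirely concentrated in Proposition~\ref{prop:Main}, which packages the two rigidity statements (R1) and (R2) into the single map $\Psi$ precisely so that the sandwich $\mathcal{H}^{\infty}\subset\mathcal{D}\subset\mathcal{H}^{\beta}$ yields the reduction for every intermediate regularity simultaneously. The only point that deserves a brief remark in the write-up is this sandwich: one should note that both endpoints (the strong positive conclusion at $C^{\infty}$ and the strong negative conclusion at Hölder) are needed to cover the whole range of $(D)$ in one stroke.
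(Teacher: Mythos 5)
Your proposal is correct and follows essentially the same argument as the paper: use (R1) and the inclusion $\mathcal{H}^{\infty}\subseteq\mathcal{D}$ for the forward direction, and (R2) together with the inclusion of $\mathcal{D}$ into the Hölder homeomorphisms for the contrapositive of the converse, with continuity of $\Psi$ coming from Proposition~\ref{prop:Main}.
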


\begin{proof}
	If $\a E_0 \b$, then the $C^{\infty}$-diffeomorphisms $\Psi(\a)$ and $\Psi(\b)$ are $C^{\infty}$-conjugated by part (1) of Proposition \ref{prop:Main}. Hence, they are $\mathcal{D}$-conjugate. \\
	If $\a$ and $\b$ are not equivalent in $E_0$, then there are infinitely many $n \in \N$ with $a_n \neq b_n$. Thus, the $C^{\infty}$-diffeomorphisms $\Psi(\a)$ and $\Psi(\b)$ are not Hölder-conjugate by part (2) of Proposition \ref{prop:Main} and, hence, they are not $\mathcal{D}$-conjugate. \\
	Altogether, the $C^{\infty}$-diffeomorphisms $\Psi(\a)$ and $\Psi(\b)$ are $\mathcal{D}$-conjugate if and only if $\a E_0 \b$. This shows that the continuous map $\Psi: \{0,1\}^{\N} \to \mathcal{H}^{\infty}_{\alpha}$ is a reduction from $E_0$ to the $\mathcal{D}$-conjugacy relation of orientation-preserving $C^{\infty}$-diffeomorphisms of the circle. 
\end{proof}

Then we apply Theorem \ref{thm:Crit} to conclude the following result which in particular implies our Main Theorem \ref{theo:A}.  

\begin{theorem}
	Let  $\alpha \in \mathbb{S}^1$ be a Liouville number and $\mathcal{D}$ be the collection of orientation-preserving circle homeomorphisms with regularity $(D)$, where $(D)$ could be any degree of regularity from Hölder to $C^{\infty}$. Then there is no complete numerical Borel invariant for the $\mathcal{D}$-conjugacy relation of  diffeomorphisms in $\mathcal{H}^{\infty}_{\alpha}$.
\end{theorem}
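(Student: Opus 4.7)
The plan is to deduce this theorem in one short step from the preceding corollary together with the Harrington--Kechris--Louveau criterion already packaged as Theorem \ref{thm:Crit}. First I would fix the Liouville number $\alpha$ and the regularity class $\mathcal{D}$ and invoke the corollary to obtain a continuous reduction from $E_0$ to the $\mathcal{D}$-conjugacy relation $E_{\mathcal{D}}$ on $\mathcal{H}^{\infty}_{\alpha}$. Since every continuous map between Polish spaces is Borel, this immediately upgrades to $E_0 \precsim_{\mathcal{B}} E_{\mathcal{D}}$, which is one of the two hypotheses of Theorem \ref{thm:Crit}.

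Before applying Theorem \ref{thm:Crit}, I would check its standing hypothesis that the underlying space is an uncountable Polish space. The rotation number $\tau:\mathcal{H}\to \mathbb{S}^1$ is continuous, so its restriction to $\mathcal{H}^{\infty}$ is continuous as well, making $\mathcal{H}^{\infty}_{\alpha} = \tau^{-1}(\{\alpha\})$ a closed, hence Polish, subspace of the Polish space $\mathcal{H}^{\infty}$. Uncountability is automatic from the injectivity of the map $\Psi:\{0,1\}^{\N}\to \mathcal{H}^{\infty}_{\alpha}$ provided by Proposition \ref{prop:Main}, whose domain is uncountable. With both hypotheses satisfied, Theorem \ref{thm:Crit} yields $E_{\mathcal{D}} \not\precsim_{\mathcal{B}} =_{\R}$, which by the reduction to $=_{\R}$ observed in Section \ref{subsec:Descriptive} is precisely the statement that no Borel function $\varphi: \mathcal{H}^{\infty}_{\alpha} \to \R$ satisfies $\varphi(S)=\varphi(T)$ if and only if $S\, E_{\mathcal{D}}\, T$.

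There is no genuine obstacle at this stage; the content has been compressed into Proposition \ref{prop:Main}, where the quantitative Approximation-by-Conjugation construction delivers both the positive property (R1) and the delicate rigidity property (R2) preventing even Hölder conjugacies between inequivalent sequences. The only minor bookkeeping is to note that, since restriction preserves Borel measurability, a complete numerical Borel invariant on all of $\mathcal{H}^{\infty}$ would restrict to one on the closed subspace $\mathcal{H}^{\infty}_{\alpha}$; therefore the restricted statement proved here implies Main Theorem \ref{theo:A} in its unrestricted form.
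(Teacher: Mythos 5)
Your proposal is correct and follows exactly the paper's intended route: the paper simply states that the theorem follows by applying Theorem~\ref{thm:Crit} to the continuous reduction from $E_0$ supplied by the preceding corollary, and you have filled in the routine hypothesis checks (that $\mathcal{H}^{\infty}_{\alpha}$ is an uncountable Polish space and that continuity upgrades to Borelness). Your closing observation that the restricted theorem implies Main Theorem~\ref{theo:A} is also the paper's implicit point.
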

	
	\section{Construction of the reduction} \label{sec:constr}
	Let  $\alpha \in \mathbb{S}^1$ be a Liouville number. For every sequence $\a=(a_n)_{n\in \N}$ we construct a diffeomorphism $T_{\a} \in \mathcal{H}^{\infty}_{\alpha}$ by an inductive construction process as the limit of a sequence $(T_{\a , n})_{n \in \N_0}$ of $C^{\infty}$-diffeomorphisms of the circle defined by
	\begin{equation} \label{eq:AbC}
		T_{\a,n}=H_{\a ,n} \circ R_{\alpha_{n+1}} \circ H^{-1}_{\a , n} \text{ for } n \in \N_0.
	\end{equation} 
   Here, we take $H_{\a , 0} = \operatorname{id}$ and for $n \in \N$ we successively construct rational numbers $\alpha_n = \frac{p_n}{q_n}$ as well as conjugation maps $H_{\a , n} = H_{\a , n-1} \circ h_{\a , n}$ with a $C^{\infty}$-diffeomorphism $h_{\a , n}$ satisfying 
	\begin{equation} \label{eq:equiv}
	h_{\a , n} \circ R_{1/q_n} = R_{1/q_n} \circ h_{\a , n}. 
	\end{equation}
	
	In Subsection \ref{subsec:conj} we present the construction of our conjugation map $h_{\a , n}$ which depends on entry $a_n$ in the sequence $\a=(a_n)_{n\in \N}$. This map will allow explicit norm estimates that we use in Subsection \ref{subsec:conv} to prove convergence of the sequence $(T_{\a ,n})_{n \in \N}$ in the space $\mathcal{H}^{\infty}_{\alpha}$ of orientation-preserving $C^{\infty}$-diffeomorphisms with the prescribed rotation number $\alpha$.
	
	\subsection{Construction of conjugation maps} \label{subsec:conj}
	We use a $C^{\infty}$-function $\psi: \mathbb{R} \rightarrow [0,1]$ satisfying $\psi\left( (-\infty, 0] \right) = {0}$, $\psi\left( [ 1, \infty ) \right) = {1}$ and $\psi$ is strictly monotone increasing on $[0,1]$. For any $t \in (0,1/4)$ we define a strictly increasing $C^{\infty}$ diffeomorphism $\hat{h}_t:[0,1] \to [0,1]$ as follows: $\hat{h}_t\left(x\right) =$
	\begin{equation*}
		 \begin{cases} 
			x & \textit{ if } x \in \left[0, t\right], \\
			\left(1-\psi(t^{-1}(x-t))\right) \cdot x + \psi(t^{-1}(x-t)) \cdot \left(t \cdot \left(x-\frac{1}{4}\right)+3t\right) & \textit{ if } x \in \left[t, 2t\right], \\
			t \cdot \left(x-\frac{1}{4}\right)+3t & \textit{ if } x \in \left[2t, \frac{1-t+8t^2}{2}\right], \\
			\left(1-\psi(t^{-2}(x-\frac{1-t+8t^2}{2}))\right) \cdot \left(t \left(x-\frac{1}{4}\right)+3t\right) & \\
			+ \psi(t^{-2}(x-\frac{1-t+8t^2}{2})) \cdot \left(t^{-1} \left( x - \frac{1}{2}\right)+\frac{1}{2}\right) & \textit{ if } x \in \left[\frac{1-t+8t^2}{2},\frac{1-t+10t^2}{2}\right], \\
			t^{-1} \cdot \left( x - \frac{1}{2}\right)+\frac{1}{2} & \textit{ if } x \in \left[ \frac{1-t+10t^2}{2}, \frac{1+t-10t^2}{2} \right], \\
			\left(1-\psi(t^{-2}(x-\frac{1+t-10t^2}{2}))\right) \cdot \left(t^{-1} \left( x - \frac{1}{2}\right)+\frac{1}{2}\right) & \\
			+ \psi(t^{-2}(x-\frac{1+t-10t^2}{2})) \cdot \left(t  \left(x-\frac{3}{4}\right)+1-3t\right) & \textit{ if } x \in \left[\frac{1+t-10t^2}{2},\frac{1+t-8t^2}{2}\right], \\
			t \cdot \left(x-\frac{3}{4}\right)+1-3t & \textit{ if } x \in \left[ \frac{1+t-8t^2}{2} , 1-2t \right], \\
			\left(1-\psi(t^{-1}(x-1+2t))\right) \cdot \left(t  \left(x-\frac{3}{4}\right)+1-3t\right)+ \psi(t^{-1}(x-1+2t))  x & \textit{ if } x \in \left[1-2t,1-t\right], \\
			x & \textit{ if } x \in \left[1-t,1 \right].
		\end{cases}
	\end{equation*}

In particular, $\hat{h}_t$ coincides with the identity in the neighborhood of the boundary. See Figure \ref{fig:1} for a visualisation of such a map. 
\begin{figure}[h]
	\begin{center}
		\includegraphics[scale=0.75]{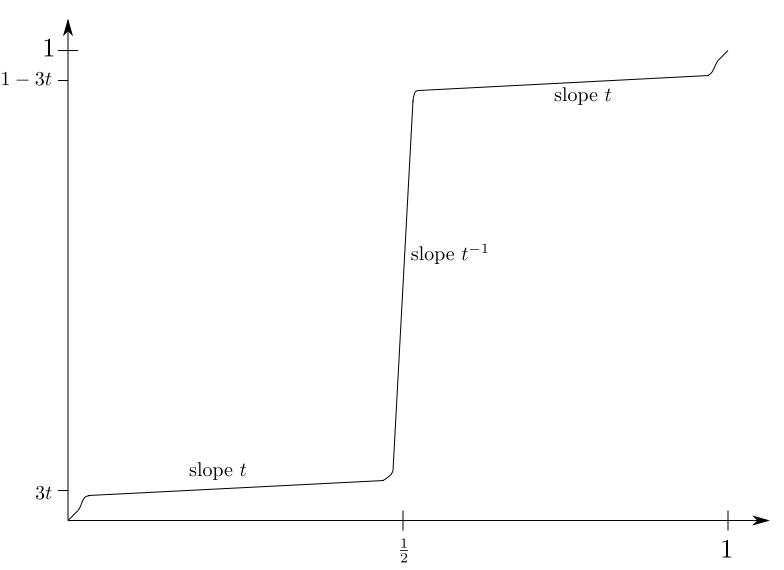}
		\caption{Qualitative shape of the function $\hat{h}_t$}
		\label{fig:1}
	\end{center}
\end{figure}
We use this explicit form of the map $\hat{h}_t$ to obtain norm estimates as a power of $q_n$ in \eqref{eq:normh}. We also note that $\hat{h}^2_t$ has slope $t^{-2}$ on 
\begin{equation} \label{eq:Slope}
\hat{I}_t \coloneqq \left[ \frac{1}{2}-\frac{t^2}{4}, \frac{1}{2}+ \frac{t^2}{4} \right].
\end{equation}

Since $\hat{h}_t$ is a strictly increasing $C^{\infty}$ function, it has a strictly increasing smooth inverse function satisfying 
\begin{equation*}
	\hat{h}^{-1}_t\left(x\right) = \begin{cases} 
		x & \textit{ if } x \in \left[0,t\right], \\
		t^{-1} \cdot \left(x-3t\right)+\frac{1}{4} & \textit{ if } x \in \left[3t-\frac{t-8t^2}{4}, 3t+\frac{t-2t^2}{4}\right], \\
		t \cdot \left( x - \frac{1}{2}\right)+\frac{1}{2} & \textit{ if } x \in \left[ 5t , 1-5t \right], \\
		t^{-1} \cdot \left(x-1+3t\right)+\frac{3}{4} & \textit{ if } x \in \left[ 1-3t-\frac{t-2t^2}{4}, 1-3t+\frac{t-8t^2}{4} \right], \\
		x & \textit{ if } x \in \left[1-t,1 \right].
	\end{cases}
\end{equation*}
Hence, $\hat{h}^{-2}_t$ has slope $t^{-2}$ on 
\begin{equation}\label{eq:SlopeInv}
\hat{J}_t \coloneqq \left[3t-\frac{t}{4}+3t^2-\frac{t^2}{8}, 3t-\frac{t}{4}+3t^2+ \frac{t^2}{8}\right] \cup \left[ 1-3t+\frac{t}{4}-3t^2-\frac{t^2}{8}, 1-3t+\frac{t}{4}-3t^2+ \frac{t^2}{8} \right].
\end{equation}

	For any $n \in \N$ we take 
	\begin{equation} \label{eq:t}
		t_n = q^{1-2n}_n
	\end{equation}
	and define $\tilde{h}_n = \hat{h}_{t_n}$. Let $h_{q_n}$ be the lift of $\tilde{h}_n$ by the cyclic $q_n$-fold covering map $\pi_{q_n}$ such that $\operatorname{Fix}(h_{q_n}) \neq \emptyset$. Finally, we define
	\[
	h_{\a , n} = \begin{cases}
		h_{q_n} & \text{ if } a_n =0, \\
		h^{-1}_{q_n} & \text{ if } a_n =1.
	\end{cases}
	\]
	Clearly, the commutativity condition \eqref{eq:equiv} is satisfied. We also observe for any $r \in \N$ that 
	\begin{equation} \label{eq:normh}
		\vertiii{h_{\a , n}}_r \leq C_{n,r} \cdot q^{N(n,r)}_n
	\end{equation}
	with some positive integer $N(n,r)$ and some constant $C_{n,r}$ that are independent of $q_n$ and $\a \in \{0,1\}^{\N}$. 
	With the aid of this estimate and a generalized chain rule we are able to prove an estimate on the norms of $H_{\a , n} = H_{\a , n-1} \circ h_{\a , n}$.
	
	\begin{lemma}\label{lem:normH}
		For every $k \in \N$ we have
		\[
		\vertiii{H_{\a ,n}}_k \leq C(n,H_{\a , n-1},k) \cdot q^{k\cdot N(n,k)}_n,
		\]
		where $C(n,H_{\a , n-1},k)$ is a constant depending solely on $n$, $H_{\a , n-1}$, and $k$. Since $H_{\a , n-1}$ is independent of
		$q_n$ in particular, the same is true for $C(n,H_{\a , n-1},k)$.
	\end{lemma}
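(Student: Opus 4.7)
The plan is to apply the generalized chain rule (Faà di Bruno formula) to $H_{\a,n} = H_{\a,n-1} \circ h_{\a,n}$, and then to bound each resulting term separately using the estimate \eqref{eq:normh} for derivatives of $h_{\a,n}$ together with a $q_n$-independent bound for derivatives of $H_{\a,n-1}$.

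More precisely, for $1 \le k$ we have
\[
D^k(H_{\a,n-1} \circ h_{\a,n})(x) = \sum_{(m_1,\dots,m_k)} \frac{k!}{m_1!\cdots m_k!}\,(D^j H_{\a,n-1})(h_{\a,n}(x)) \prod_{i=1}^{k}\left(\frac{D^i h_{\a,n}(x)}{i!}\right)^{m_i},
\]
where the sum runs over tuples of nonnegative integers with $\sum_{i=1}^k i\, m_i = k$ and $j = \sum_{i=1}^k m_i$. Since $H_{\a,n-1}$ is already constructed and fixed before step $n$, we have a uniform bound $\max_{0\le j\le k}\vertiii{H_{\a,n-1}}_j \le C_1(n,H_{\a,n-1},k)$ with no $q_n$ dependence. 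For each $i \le k$, the estimate \eqref{eq:normh} gives $|D^i h_{\a,n}|\le C_{n,i}\, q_n^{N(n,i)}$. Replacing $N(n,k)$ by $\max_{1\le i\le k} N(n,i)$ (which only enlarges the bound), each monomial contributes a power of $q_n$ equal to $\sum_i m_i N(n,i) \le N(n,k)\sum_i m_i \le k\cdot N(n,k)$, since $\sum_i m_i \le \sum_i i\, m_i = k$. The combinatorial prefactors and the constants $C_{n,i}^{m_i}$ can all be lumped into a single constant $C_2(n,k)$. Summing over the finitely many partition tuples for a fixed $k$ therefore gives
\[
\norm{H_{\a,n}}_k \le C(n,H_{\a,n-1},k)\cdot q_n^{k\cdot N(n,k)}.
\]

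For the inverse, we write $H_{\a,n}^{-1} = h_{\a,n}^{-1} \circ H_{\a,n-1}^{-1}$ and run exactly the same Faà di Bruno expansion, noting that the bound \eqref{eq:normh} is formulated via $\vertiii{\cdot}_r$ and thus already controls the $C^k$-norm of $h_{\a,n}^{-1}$ by $C_{n,k}\, q_n^{N(n,k)}$, whereas $H_{\a,n-1}^{-1}$ contributes only a constant $C_1'(n,H_{\a,n-1},k)$ independent of $q_n$. Taking the maximum of the two constants and using the same argument yields the same bound $C(n,H_{\a,n-1},k)\cdot q_n^{k\cdot N(n,k)}$ for $\norm{H_{\a,n}^{-1}}_k$.

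The proof is essentially bookkeeping; the only delicate point is checking that the total power of $q_n$ in each Faà di Bruno term is controlled by $k \cdot N(n,k)$ uniformly, which follows from the elementary inequality $\sum m_i \le \sum i\,m_i = k$. All other constants depend on $n$, on the previously constructed map $H_{\a,n-1}$, and on $k$, but not on the parameter $q_n$ we are still free to choose, which is exactly what subsequent convergence estimates in Subsection \ref{subsec:conv} will need.
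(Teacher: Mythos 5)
Your proposal is correct and follows essentially the same route as the paper: both apply Faà di Bruno to $H_{\a,n}=H_{\a,n-1}\circ h_{\a,n}$ and to $H_{\a,n}^{-1}=h_{\a,n}^{-1}\circ H_{\a,n-1}^{-1}$, and control the power of $q_n$ via the combinatorial constraint $\sum_i m_i\le\sum_i i\,m_i=k$. You spell out the multi-index expansion explicitly, whereas the paper compresses the same bookkeeping into the single crude estimate $\norm{D^k(F\circ G)}\le C(k)\,\norm{F}_k\,\norm{G}_k^k$, but the exponent $k\cdot N(n,k)$ and the separation of the $q_n$-dependent from the $q_n$-independent constants are identical.
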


\begin{proof}
	By the Fa\`a di Bruno's formula we get
	\[
	\norm{D^k\left(H_{\a , n-1} \circ h_{\a , n}\right)} \leq C(k) \cdot \norm{H_{\a , n-1}}_k \cdot \norm{h_{\a , n}}^k_k
	\]
	and
	\[
	\norm{D^k \left(h^{-1}_{\a , n} \circ H^{-1}_{\a , n-1}\right)} \leq C(k) \cdot \norm{h^{-1}_{\a , n}}_k \cdot \norm{H^{-1}_{\a , n-1}}^k_k
	\]
	with a constant $C(k)$ solely depending on $k$. Using \eqref{eq:normh} these estimates imply
	\[
	\vertiii{H_{\a ,n}}_k \leq C(k) \cdot \vertiii{H_{\a , n-1}}^k_k \cdot \vertiii{h_{\a , n}}^k_k \leq C(n,H_{\a , n-1},k) \cdot q^{kN(n,k)}_n,
	\]
	where $C(n,H_{\a , n-1},k)$ is a constant depending solely on $n$, $H_{\a , n-1}$, and $k$.
\end{proof}

	\subsection{Convergence of the sequence $(T_{\a ,n})_{n \in \N}$} \label{subsec:conv}
	We start our proof of convergence of the sequence $(T_{\a ,n})_{n \in \N}$ in $\mathcal{H}^{\infty}_{\alpha}$ by stating the following lemma based upon the mean value theorem and the chain rule.
\begin{lemma}[\cite{FS}, Lemma 5.6] \label{lem:konj}
	Let $k \in \mathbb{N}_0$ and $h$ be a $C^{\infty}$-diffeomorphism. Then we get for every $\alpha,\beta \in \mathbb{R}$:
	\begin{equation*}
		d_k\left(h \circ R_{\alpha} \circ h^{-1}, h \circ R_{\beta} \circ h^{-1}\right) \leq C_k \cdot \vertiii{h}^{k+1}_{k+1} \cdot \left| \alpha - \beta \right|,
	\end{equation*}
	where the constant $C_k$ depends solely on $k$. In particular $C_0 = 1$.
\end{lemma}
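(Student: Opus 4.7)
The plan is to bound $\|f_\alpha - f_\beta\|_k$ for $f_\gamma := h \circ R_\gamma \circ h^{-1}$ directly, and to extract the $|\alpha - \beta|$ factor via a one-parameter integral representation. Since $f_\gamma^{-1} = h \circ R_{-\gamma} \circ h^{-1}$ has the same conjugated-rotation form, the inverse bound $\|f_\alpha^{-1} - f_\beta^{-1}\|_k$ follows from the direct estimate by symmetry. For the base case $k=0$, the mean value theorem applied to $t \mapsto h(h^{-1}(x)+t)$ immediately yields
\[
|f_\alpha(x) - f_\beta(x)| = |h(h^{-1}(x)+\alpha) - h(h^{-1}(x)+\beta)| \leq \|h'\|_0 \cdot |\alpha - \beta| \leq \vertiii{h}_1 \cdot |\alpha-\beta|,
\]
giving $C_0 = 1$.

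For $k \geq 1$ the key identity is the fundamental theorem of calculus in the parameter,
\[
f_\alpha(x) - f_\beta(x) = \int_\beta^\alpha h'(h^{-1}(x) + t)\, dt,
\]
which pushes the $\alpha,\beta$-dependence entirely into the limits of integration. Differentiating $k$ times in $x$ under the integral (justified since $h$ is $C^\infty$) reduces the problem to the uniform bound
\[
\sup_{x, t} \bigl| D^k_x[h'(h^{-1}(x) + t)] \bigr| \leq C_k \cdot \vertiii{h}_{k+1}^{k+1}.
\]
To obtain this I would apply Fa\`a di Bruno's formula to the composition $g_t \circ h^{-1}$ with $g_t(y) := h'(y+t)$. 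Each term of the resulting sum over partitions $\pi$ of $k$ is a combinatorial constant times one outer factor $h^{(|\pi|+1)}(h^{-1}(x)+t)$, bounded by $\|h\|_{k+1}$, times $|\pi|$ inner factors $(h^{-1})^{(j)}(x)$ of orders $1 \leq j \leq k$, each bounded by $\|h^{-1}\|_k$. Since any orientation-preserving circle diffeomorphism has $\vertiii{h}_{k+1} \geq 1$ (because $h'$ averages to $1$), each such term is at most $\vertiii{h}_{k+1}^{|\pi|+1} \leq \vertiii{h}_{k+1}^{k+1}$, and summing over the finitely many partitions absorbs the combinatorial coefficients into $C_k$. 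Integration over $t \in [\beta,\alpha]$ then delivers the claimed inequality.

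The main bookkeeping challenge is the exponent count in Fa\`a di Bruno: one has to verify that every term contributes exactly $|\pi|+1$ factors of derivatives of $h$ or $h^{-1}$ — a single outer factor of order $|\pi|+1$ and $|\pi|$ inner factors whose orders sum to $k$ — so that the tight exponent $|\pi|+1 \leq k+1$ appears. This is also what forces the right-hand side to use the $(k{+}1)$-norm rather than the $k$-norm: the outer derivative $h^{(|\pi|+1)}$ can reach order $k+1$, so the $C^k$-norm of $h$ alone does not control it.
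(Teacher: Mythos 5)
The paper does not prove this lemma; it is imported verbatim from Fayad--Saprykina \cite{FS}, Lemma 5.6, so there is no in-paper argument to compare against. Your proof is correct and is the standard route: push the $\alpha,\beta$-dependence into an integral via $f_\alpha(x)-f_\beta(x)=\int_\beta^\alpha h'(h^{-1}(x)+t)\,dt$, differentiate $k$ times under the integral sign, and control $D^k_x\bigl[h'(h^{-1}(x)+t)\bigr]$ by Fa\`a di Bruno, with the inverse estimate obtained for free from $f_\gamma^{-1}=h\circ R_{-\gamma}\circ h^{-1}$. The exponent bookkeeping is right: a partition $\pi$ of $\{1,\dots,k\}$ contributes one outer factor $h^{(|\pi|+1)}$ and $|\pi|$ inner factors of $(h^{-1})^{(\cdot)}$, giving $|\pi|+1\le k+1$ powers of $\vertiii{h}_{k+1}$; and the monotonicity step $\vertiii{h}_{k+1}^{|\pi|+1}\le\vertiii{h}_{k+1}^{k+1}$ is justified by $\vertiii{h}_{k+1}\ge 1$, which holds for circle diffeomorphisms because $h'$ has mean $1$. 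One small point you glossed over: $d_k$ is a maximum over all derivative orders $0\le i\le k$, not just $i=k$; but the identical argument at order $i$ gives $C_i\vertiii{h}_{i+1}^{i+1}|\alpha-\beta|\le C_k\vertiii{h}_{k+1}^{k+1}|\alpha-\beta|$ (again using $\vertiii{h}_{k+1}\ge 1$), so this closes immediately.
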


Since $\alpha$ is a Liouville number, the proof of convergence of $(T_{\a ,n})_{n \in \N}$ in $\mathcal{H}^{\infty}_{\alpha}$ is similar to the proof of the general criterion in \cite[Lemma 5.7]{FS}. We additionally ensure that the same sequence $(\alpha_n)_{n \in \N}$ yields convergence for all $\a=(a_n)_{n\in \N}\in \{0,1\}^{\N}$. This is used in the third part of the lemma which will guarantee continuity of our reduction $\Psi$.
\begin{lemma} \label{lem:conv}
	There is a sequence $(\alpha_n)_{n \in \N}$ of rational numbers $\alpha_n =\frac{p_n}{q_n}$ with
	\begin{equation} \label{eq:qGrowth}
		q_{n+1} \geq q^{2n}_n \ \text{ for every } n\in \N
	\end{equation}
	such that the following properties hold:
	\begin{enumerate}
		\item For every sequence $\a \in \{0,1\}^{\N}$ the sequence of diffeomorphisms $(T_{\a ,n})_{n \in \N}$ defined by \eqref{eq:AbC} converges in the $\diff^{\infty}$-topology to a smooth diffeomorphism $T_{\a}$. 
		\item Also the sequence of diffeomorphisms $\hat{T}_{\a ,n}=H_{\a ,n} \circ R_{\alpha} \circ H^{-1}_{\a ,n} \in \mathcal{H}^{\infty}_{\alpha}$ converges to $T_{\a}$ in the $\diff^{\infty}$-topology. Hence, $T_{\a} \in \mathcal{H}^{\infty}_{\alpha}$.
		\item For every $\varepsilon>0$ there is $N \in \N$ such that for all sequences $\a=(a_n)_{n\in \N}, \b=(b_n)_{n\in \N} \in \{0,1\}^{\N}$ with $a_n=b_n$ for all $n \leq N$ we have
		\[
		d_{\infty}(T_{\a},T_{\b}) <\varepsilon.
		\]
	\end{enumerate}
\end{lemma}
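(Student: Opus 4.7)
The plan is to build the sequence $(\alpha_n)_{n\in\N}$ inductively so that the perturbation estimate of Lemma \ref{lem:konj} becomes summable \emph{uniformly} in $\a\in\{0,1\}^{\N}$. The key observation enabling such uniformity is that $H_{\a,n}=h_{\a,1}\circ\cdots\circ h_{\a,n}$ depends only on $a_1,\ldots,a_n$ and on the already fixed $q_1,\ldots,q_n$, so as $\a$ varies only finitely many (at most $2^n$) conjugation maps arise at stage $n$. Consequently, once $\alpha_1,\ldots,\alpha_n$ are fixed, the quantity
\[
M_n \;\coloneqq\; \max_{(a_1,\ldots,a_n)\in\{0,1\}^n}\ \max_{0\le k\le n+1}\ \vertiii{H_{\a,n}}_{k+1}
\]
is a finite constant (by Lemma \ref{lem:normH}), independent of the still-to-be-chosen $q_{n+1}$.

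Using the Liouville property of $\alpha$---which supplies rationals $p_{n+1}/q_{n+1}$ with $q_{n+1}$ arbitrarily large and $|\alpha-p_{n+1}/q_{n+1}|$ arbitrarily small---I would pick $\alpha_{n+1}=p_{n+1}/q_{n+1}$ satisfying simultaneously the growth condition $q_{n+1}\ge q_n^{2n}$ and
\[
|\alpha-\alpha_{n+1}| \;<\; \frac{1}{2^{n+2}\,(1+M_n)^{n+2}\,\bigl(1+\max_{0\le k\le n+1}C_k\bigr)},
\]
where the $C_k$ are the constants of Lemma \ref{lem:konj}. Combined with the analogous bound on $|\alpha-\alpha_n|$ from the previous step, the triangle inequality then controls $|\alpha_{n+1}-\alpha_n|$ by twice the same quantity.

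For part (1), the identity
\[
T_{\a,n-1} \;=\; H_{\a,n-1}\circ R_{\alpha_n}\circ H_{\a,n-1}^{-1} \;=\; H_{\a,n}\circ R_{\alpha_n}\circ H_{\a,n}^{-1},
\]
valid because $h_{\a,n}$ commutes with $R_{1/q_n}$ by \eqref{eq:equiv} and hence with $R_{\alpha_n}$, lets me apply Lemma \ref{lem:konj} with $h=H_{\a,n}$ to obtain
\[
d_k(T_{\a,n},T_{\a,n-1}) \;\le\; C_k\,\vertiii{H_{\a,n}}_{k+1}^{k+1}\,|\alpha_{n+1}-\alpha_n| \;<\; 2^{-n}
\]
for all $k\le n$ and all $\a$. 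Thus $(T_{\a,n})_n$ is Cauchy in every $C^k$-norm uniformly in $\a$, and the limit $T_{\a}$ is a $C^{\infty}$-diffeomorphism since $d_k$ controls inverses as well. For part (2), the same lemma gives
\[
d_k(\hat T_{\a,n},T_{\a,n}) \;\le\; C_k\,\vertiii{H_{\a,n}}_{k+1}^{k+1}\,|\alpha-\alpha_{n+1}| \;\longrightarrow\; 0,
\]
so $\hat T_{\a,n}\to T_{\a}$ in $\diff^{\infty}$; since each $\hat T_{\a,n}$ is smoothly conjugate to $R_\alpha$ and hence has rotation number $\alpha$, continuity of the rotation number on $\mathcal{H}$ forces $T_{\a}\in\mathcal{H}^{\infty}_{\alpha}$.

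Part (3) then falls out of the uniform tail bound. If $a_j=b_j$ for $j\le N$, then $H_{\a,n}=H_{\b,n}$ and $T_{\a,n}=T_{\b,n}$ for every $n\le N$, so
\[
d_{\infty}(T_{\a},T_{\b}) \;\le\; d_{\infty}(T_{\a},T_{\a,N}) + d_{\infty}(T_{\b,N},T_{\b}),
\]
and both tails are dominated by the same summable series $\sum_{n>N}2^{-n}\to 0$. The main technical obstacle is precisely the uniformity: the single sequence $(\alpha_n)$ must be committed to before knowing which $\a$ is processed, which forces us to replace $\vertiii{H_{\a,n}}_{k+1}$ by the maximum $M_n$ over all $2^n$ branches at each stage. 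The finiteness of $M_n$ granted by Lemma \ref{lem:normH}, together with the Liouville property of $\alpha$ (which permits rational approximants with prescribed error at prescribed denominator threshold), is exactly what makes this single-sequence construction feasible.
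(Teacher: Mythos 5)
Your inductive scheme has a circularity that hides the real role of the Liouville hypothesis, and as written the key estimate does not close.

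You define $M_n$ in terms of $\vertiii{H_{\a,n}}_{k+1}$, which depends on $q_n$, and then when choosing $\alpha_{n+1}$ you impose $|\alpha-\alpha_{n+1}| < \bigl(2^{n+2}(1+M_n)^{n+2}(1+\max C_k)\bigr)^{-1}$. But the bound you invoke for part (1) is
\[
d_k(T_{\a,n},T_{\a,n-1}) \le C_k\,\vertiii{H_{\a,n}}_{k+1}^{k+1}\,|\alpha_{n+1}-\alpha_n|,
\]
and since the sequence $(\alpha_m)$ converges to $\alpha$, we have $|\alpha_{n+1}-\alpha_n| \to |\alpha-\alpha_n|$ as you push $\alpha_{n+1}$ toward $\alpha$; so $|\alpha_{n+1}-\alpha_n|$ can never be made smaller than roughly $|\alpha-\alpha_n|$, a quantity already frozen when $\alpha_n$ was chosen. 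At that earlier stage you only had $M_{n-1}$ at your disposal, not $M_n$ (which depends on $q_n$, the thing being selected). Consequently the product you need to control is of size about $M_n^{n+1}|\alpha-\alpha_n|$, which your imposed inequality (smallness relative to $M_{n-1}$) does not bound: $M_n$ dwarfs $M_{n-1}$ because it absorbs $\vertiii{h_{\a,n}}$, which grows as a power of $q_n$. The claimed inequality $d_k(T_{\a,n},T_{\a,n-1})<2^{-n}$ does not follow.

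The resolution, which is what the paper does via Lemma~\ref{lem:normH}, is to factor the norm bound into a $q_n$\emph{-independent} constant times an explicit power of $q_n$: $\vertiii{H_{\a,n}}_{n+1} \le \hat C_n\,q_n^{(n+1)N(n,n+1)}$, with $\hat C_n$ depending only on the earlier data $H_{\a,n-1}$. One then demands at stage $n$ that $|\alpha - p_n/q_n|$ be smaller than a fixed constant divided by a fixed power of $q_n$, and this self-referential requirement is precisely, and only, guaranteed by $\alpha$ being Liouville. Your description of the Liouville property as supplying "rationals with $q$ arbitrarily large and $|\alpha - p/q|$ arbitrarily small" (or "prescribed error at prescribed denominator threshold") is something every irrational satisfies; it is the bound $|\alpha - p/q| < C/q^m$ for arbitrary fixed $C$ and $m$ that is needed and that separates Liouville numbers from Diophantine ones. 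Parts (2) and (3) of your argument are essentially correct and match the paper once the estimate of part (1) is repaired.
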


\begin{proof}
	\begin{enumerate}
		\item According to condition \eqref{eq:equiv} of our construction we have $h_{\a , n} \circ R_{\alpha_n} = R_{\alpha_n} \circ h_{\a , n}$ and, hence,
		\begin{align*}
			T_{\a , n-1} & = H_{\a , n-1} \circ R_{\alpha_n} \circ H^{-1}_{\a , n-1} = H_{\a , n-1} \circ R_{\alpha_n} \circ h_{\a , n} \circ h^{-1}_{\a , n} \circ H^{-1}_{ \a , n-1}  \\
			& = H_{\a , n-1} \circ h_{\a , n} \circ R_{\alpha_n} \circ h^{-1}_{\a , n} \circ H^{-1}_{\a , n-1} = H_{\a , n} \circ R_{\alpha_n} \circ H^{-1}_{\a , n}
		\end{align*}
		for every sequence $\a \in \{0,1\}^{\N}$. Applying Lemma \ref{lem:konj} we obtain for every $k,n \in \mathbb{N}$:
		\begin{equation}  \label{eq:est1}
			\begin{split}
				d_k\left(T_{\a , n}, T_{\a , n-1}\right) & = d_k\left(H_{\a , n} \circ R_{\alpha_{n+1}} \circ T^{-1}_{\a , n}, H_{\a , n} \circ R_{\alpha_n} \circ H^{-1}_{\a , n}\right) \\
				& \leq C_k \cdot \vertiii{H_{\a , n}}^{k+1}_{k+1} \cdot \left| \alpha_{n+1} - \alpha_n \right|.
			\end{split}
		\end{equation}
		We assume $\left|\alpha - \alpha_n \right| \stackrel{n \rightarrow \infty}{\longrightarrow} 0$ monotonically. Using the triangle inequality we obtain $\left| \alpha_{n+1} - \alpha_n \right| \leq \left| \alpha_{n+1} - \alpha \right| + \left| \alpha - \alpha_n \right| \leq 2 \cdot \left| \alpha - \alpha_n \right|$ and therefore equation \eqref{eq:est1} becomes:
		\begin{equation*}
			d_k\left(T_{\a , n}, T_{\a , n-1}\right) \leq C_k \cdot \vertiii{H_{\a , n}}^{k+1}_{k+1} \cdot 2 \cdot \left| \alpha - \alpha_{n} \right|.
		\end{equation*}
		To estimate the norm of $H_{\a , n}$ we use Lemma \ref{lem:normH}. Here, we note that $H_{\a , n-1}$ depends on the sequence entries $a_1, \dots , a_{n-1}$ only. Hence, 
		\[
		\hat{C}_n \coloneqq \max_{\a \in \{0,1\}^{\N}} C\left(n,H_{\a , n-1},n+1\right)
		\] 
		is well-defined, where $C(n,H_{\a , n-1},k)$ are the constants from Lemma \ref{lem:normH}. Then Lemma \ref{lem:normH} yields
		\[
		\vertiii{H_{\a , n}}_{n +1} \leq \hat{C}_n \cdot q^{(n+1) \cdot N(n,n+1)}_n
		\]
		for every sequence $\a \in \{0,1\}^{\N}$.
		
		Since $\alpha$ is a Liouville number, there are $q_n > q^{2 \cdot (n-1)}_{n-1}$ and $p_n \in \N$ such that
		\[
		\abs{\alpha - \frac{p_n}{q_n}} < \min \left( \abs{\alpha-\alpha_{n-1}}, \, \frac{1}{2 \cdot n^2 \cdot C_n \cdot \left(\hat{C}_n \cdot q^{(n+1) \cdot N(n,n+1)}_n\right)^{n+1}} \right).
		\] 
		In particular, this implies condition \eqref{eq:qGrowth} and 
		\begin{equation} \label{eq:alpha}
		\abs{\alpha - \frac{p_n}{q_n}} < \frac{1}{2 \cdot n^2 \cdot C_{n} \cdot \vertiii{H_{\a , n}}^{n+1}_{n +1} }
		\end{equation}
		for every sequence $\a \in \{0,1\}^{\N}$. It follows for every sequence $\a \in \{0,1\}^{\N}$ and every $k \leq n$ that 
		\begin{equation} \label{eq:est4}
			\begin{split}
				d_k\left(T_{\a , n},T_{\a , n-1}\right) & \leq d_{n}\left(T_{\a , n},T_{\a , n-1}\right) \leq C_{n} \cdot \vertiii{H_{\a , n}}^{n+1}_{n+1} \cdot 2 \cdot \left| \alpha - \alpha_{n} \right| \\ 
				& \leq C_{n} \cdot \vertiii{H_{\a , n}}^{n+1}_{n+1} \cdot 2 \cdot \frac{1}{2 \cdot n^2 \cdot C_{n} \cdot \vertiii{H_{\a , n}}^{n+1}_{n +1} } \leq \frac{1}{n^2}.
			\end{split}
		\end{equation}
		In the next step we show, that for every sequence $\a \in \{0,1\}^{\N}$ and for arbitrary $k \in \mathbb{N}$ our sequence $\left(T_{\a , n}\right)_{n \in \mathbb{N}}$ is a Cauchy sequence in $\diff^k(\mathbb{S}^1)$, that is, $\lim_{n,m\rightarrow \infty} d_k\left(T_{\a , n}, T_{\a , m}\right) = 0$. For this purpose, we calculate
		\begin{equation} \label{eq:est2}
			\lim_{n \rightarrow \infty} d_k\left(T_{\a , n}, T_{\a , m}\right) \leq \lim_{n \rightarrow \infty} \sum^{n}_{i=m+1} d_k\left(T_{\a , i}, T_{\a , i-1}\right) = \sum^{\infty}_{i=m+1}d_k\left(T_{\a , i}, T_{\a , i-1}\right).
		\end{equation}
		We consider the limit process $ m \rightarrow \infty$, i.e. we can assume $k\leq m$, and obtain from equations \eqref{eq:est4} and \eqref{eq:est2} that
		\begin{equation*}
			\lim_{n,m \rightarrow \infty} d_k\left(T_{\a , n}, T_{\a , m}\right) \leq \lim_{m \rightarrow \infty} \sum^{\infty}_{i=m+1} \frac{1}{i^2} = 0.
		\end{equation*}
		Since $\diff^k(\mathbb{S}^1)$ is complete, the sequence $\left(T_{\a , n}\right)_{n \in \mathbb{N}}$ converges consequently in $\diff^k(\mathbb{S}^1)$ for every $k \in \mathbb{N}$ and for every sequence $\a \in \{0,1\}^{\N}$. Thus, the sequence converges in $\diff^{\infty}(\mathbb{S}^1)$ by definition.
		\item To show $\hat{T}_{\a ,n } \rightarrow T_{\a}$ in $\diff^{\infty}(\mathbb{S}^1)$ we compute with the aid of Lemma \ref{lem:konj} and equation \eqref{eq:alpha} that for every $n \in \mathbb{N}$ and $k \leq n$ we have
		\begin{align*}
			d_k\left(T_{\a , n}, \hat{T}_{\a , n}\right) & \leq d_{n}\left(H_{\a , n} \circ R_{\alpha_{n+1}} \circ H^{-1}_{\a , n}, H_{\a , n} \circ R_{\alpha} \circ H^{-1}_{\a , n}\right) \\
			& \leq C_{n} \cdot \vertiii{H_{\a , n}}^{n +1}_{n+1} \cdot \left| \alpha_{n+1} - \alpha \right| \leq C_{n} \cdot \vertiii{H_{\a , n}}^{n +1}_{n+1} \cdot \left| \alpha_{n} - \alpha \right| \\
			& \leq C_{n} \cdot \vertiii{H_{\a , n}}^{n +1}_{n+1} \cdot \frac{1}{2 \cdot n^2 \cdot C_{n} \cdot \vertiii{H_{\a , n}}^{n+1}_{n +1} } \leq \frac{1}{n^2}.
		\end{align*}
		This yields $\hat{T}_{\a ,n } \rightarrow T_{\a}$ in $\diff^k(\mathbb{S}^1)$ for any $k \in \N$ and $\lim_{n\rightarrow \infty}d_{\infty}\left(\hat{T}_{\a,n},T_{\a}\right) = 0$ as asserted. 
		
		Furthermore, we recall that the rotation number $\tau: \mathcal{H} \to \mathbb{S}^1$ is a continuous map in the $C^0$-topology \cite[Proposition 11.1.6]{HK}. Since $\tau(\hat{T}_{\a , n})= \alpha$ for all $n \in \N$, we also obtain $\tau (T_{\a}) = \alpha$.
		\item Since $T_{\a ,n}$ converges to $T_{\a}$ in $\diff^{\infty}$ by the first part of the lemma, there is $N_1 \in \N$ such that $d_{\infty}(T_{\a ,n},T_{\a}) <\frac{\varepsilon}{2}$ for all $n \geq N_1$. Analogously, there is $N_2 \in \N$ such that $d_{\infty}(T_{\b ,n},T_{\b}) <\frac{\varepsilon}{2}$ for all $n \geq N_2$. Let $N\coloneqq \max(N_1,N_2)$ and suppose that $a_n=b_n$ for all $n \leq N$. In particular, this implies $T_{\a , n} = T_{\b , n}$ for all $n \leq N$. Altogether we have
		\begin{align*}
			d_{\infty}(T_{\a},T_{\b}) & \leq d_{\infty}(T_{\a},T_{\a , N}) + d_{\infty}(T_{\a , N},T_{\b , N}) + d_{\infty}(T_{\b , N},T_{\b}) \\
			& = d_{\infty}(T_{\a},T_{\a , N}) + d_{\infty}(T_{\b , N},T_{\b}) \\
			& < \frac{\varepsilon}{2} + \frac{\varepsilon}{2} = \varepsilon.
		\end{align*}
	\end{enumerate}	
\end{proof}

	\section{Proof of Proposition \ref{prop:Main}} \label{sec:ProofProp}

In combination with Lemma \ref{lem:ConjRot} the following result shows that the sequence $(H_{\a , n})_{n \in \N}$ converges to the unique conjugating homeomorphism~$H_{\a}$ between the rotation $R_{\alpha}$ and $T_{\a}$ with $H_{\a}(0)=0$.

	\begin{lemma} \label{lem:convH}
		For every $\a \in \{0,1\}^{\N}$ the sequence $(H_{\a , n})_{n \in \N}$ converges in $d_0$ to a homeomorphism~$H_{\a}$ with $H_{\a}(0)=0$.
	\end{lemma}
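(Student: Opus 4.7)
The plan is to show that $(H_{\a,n})_{n\in\N}$ is Cauchy with respect to the metric $d_0$, and then identify the limit. The key point is that although $\hat{h}_{t_n}$ is far from the identity on $[0,1]$, its lift $h_{\a,n}$ by the $q_n$-fold cover is $C^0$-close to the identity with bound $1/q_n$.

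First I would establish the basic estimate
\[
\|h_{\a,n} - \mathrm{id}\|_{C^0} \leq \frac{1}{q_n}, \qquad \|h_{\a,n}^{-1} - \mathrm{id}\|_{C^0} \leq \frac{1}{q_n},
\]
which follows from the commutativity \eqref{eq:equiv} with $R_{1/q_n}$ together with the fact that $\hat{h}_{t_n}$, and hence $h_{\a,n}$ restricted to any of its $q_n$ fundamental domains, has at least one fixed point (since $\hat{h}_t=\mathrm{id}$ on $[0,t]$). Using $H_{\a,n}=H_{\a,n-1}\circ h_{\a,n}$ together with the mean value theorem, I then bound
\[
\|H_{\a,n}-H_{\a,n-1}\|_{C^0} \leq \|H_{\a,n-1}\|_{C^1}\cdot \|h_{\a,n}-\mathrm{id}\|_{C^0} \leq \frac{\|H_{\a,n-1}\|_{1}}{q_n},
\]
and analogously, from $H_{\a,n}^{-1}=h_{\a,n}^{-1}\circ H_{\a,n-1}^{-1}$,
\[
\|H_{\a,n}^{-1}-H_{\a,n-1}^{-1}\|_{C^0} \leq \|h_{\a,n}^{-1}-\mathrm{id}\|_{C^0} \leq \frac{1}{q_n}.
\]
Hence $d_0(H_{\a,n},H_{\a,n-1})\leq \max\{1,\|H_{\a,n-1}\|_{1}\}/q_n$.

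Next I would invoke Lemma \ref{lem:normH} to bound $\|H_{\a,n-1}\|_{1}$ by a quantity depending only on $q_1,\dots,q_{n-1}$ and in particular independent of $q_n$. Together with the rapid Liouville growth of $q_n$ from Lemma \ref{lem:conv} (specifically \eqref{eq:alpha}, which forces $q_n$ to dominate any polynomial expression in $\|H_{\a,n-1}\|_{n+1}$, hence in $\|H_{\a,n-1}\|_1$), this guarantees that $\sum_{n\geq 1} d_0(H_{\a,n},H_{\a,n-1})<\infty$. Consequently $(H_{\a,n})_{n\in\N}$ and $(H_{\a,n}^{-1})_{n\in\N}$ are both uniformly Cauchy on $\S^1$, so they converge uniformly to continuous limits $H_{\a}$ and some $G_{\a}$; passing to the limit in $H_{\a,n}\circ H_{\a,n}^{-1}=\mathrm{id}=H_{\a,n}^{-1}\circ H_{\a,n}$ shows that $G_{\a}=H_{\a}^{-1}$, so $H_{\a}\in\mathcal{H}$.

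Finally, $H_{\a}(0)=0$ follows because $0\in\operatorname{Fix}(h_{\a,n})$ for every $n$ (by definition $h_{q_n}$ is the lift with $\operatorname{Fix}(h_{q_n})\neq\emptyset$ and $\hat{h}_{t_n}=\mathrm{id}$ near the boundary of a fundamental domain), so by induction $H_{\a,n}(0)=H_{\a,n-1}(h_{\a,n}(0))=H_{\a,n-1}(0)=0$ and the relation persists in the limit. The only real obstacle is the bookkeeping to ensure the Cauchy estimate actually sums, but this is precisely what the Liouville choice of $(q_n)_{n\in\N}$ in Lemma \ref{lem:conv} was designed to provide; no new freedom on $(\alpha_n)$ is needed.
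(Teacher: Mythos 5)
Your handling of the inverse sequence $(H_{\a,n}^{-1})_{n\in\N}$ matches the paper exactly: the identity $H_{\a,n}^{-1}-H_{\a,n-1}^{-1}=(h_{\a,n}^{-1}-\operatorname{id})\circ H_{\a,n-1}^{-1}$ gives $\|H_{\a,n}^{-1}-H_{\a,n-1}^{-1}\|_0=\|h_{\a,n}^{-1}-\operatorname{id}\|_0\leq q_n^{-1}$ with no extra factor, and summability follows from \eqref{eq:qGrowth}. Where you part ways with the paper --- and where there is a genuine gap --- is the forward sequence. Your Cauchy estimate $\|H_{\a,n}-H_{\a,n-1}\|_0\leq \|H_{\a,n-1}\|_1\cdot q_n^{-1}$ reduces the problem to $\sum_n \vertiii{H_{\a,n-1}}_1/q_n<\infty$, and you assert this is guaranteed by the paper's Liouville choice with ``no new freedom on $(\alpha_n)$.'' That is not established. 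Equation \eqref{eq:alpha} controls $|\alpha-p_n/q_n|$ by a quantity involving $\vertiii{H_{\a,n}}_{n+1}$, which \emph{itself depends on $q_n$}; it does not force $q_n$ to dominate anything fixed before $q_n$ is chosen. The only stated growth constraint is $q_n\geq q_{n-1}^{2(n-1)}$ from \eqref{eq:qGrowth}. Since the maximal slope of $\hat h_{t_n}$ is $t_n^{-1}=q_n^{2n-1}$, the first-derivative exponent in \eqref{eq:normh} is $N(n,1)=2n-1$, and unwinding Lemma \ref{lem:normH} gives only $\vertiii{H_{\a,n-1}}_1\leq C^{\,n-1}\prod_{i=1}^{n-1}q_i^{2i-1}$, while \eqref{eq:qGrowth} yields precisely $\prod_{i=1}^{n-1}q_i^{2i-1}\leq q_n$ (the very estimate the paper uses in the proof of (R2)). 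In the extremal case these bounds leave $\vertiii{H_{\a,n-1}}_1/q_n\lesssim C^{\,n-1}$, which is not summable. To run your argument you would need to add a condition such as $q_n>n^2\sup_{\a}\vertiii{H_{\a,n-1}}_1$ to the construction of $(q_n)$ --- contrary to your claim that no new freedom is used.

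The paper sidesteps this issue altogether by \emph{not} proving $(H_{\a,n})_{n\in\N}$ Cauchy. It only proves $(H_{\a,n}^{-1})_{n\in\N}$ converges uniformly to a continuous, monotone, surjective map $H_{\a}^{-1}$; then it uses the $\diff^{\infty}$-convergence $\hat T_{\a,n}=H_{\a,n}\circ R_{\alpha}\circ H_{\a,n}^{-1}\to T_{\a}$ (Lemma \ref{lem:conv}(2)) to pass to the limit in $H_{\a,n}^{-1}\circ\hat T_{\a,n}=R_{\alpha}\circ H_{\a,n}^{-1}$ and obtain the semi-conjugacy $H_{\a}^{-1}\circ T_{\a}=R_{\alpha}\circ H_{\a}^{-1}$, from which $H_{\a}^{-1}$ is upgraded to a homeomorphism (as $T_{\a}$ is a smooth circle diffeomorphism with irrational rotation number). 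Once $H_{\a}$ is known to be a homeomorphism, and in particular uniformly continuous, convergence of the forward sequence is immediate from $\|H_{\a,n}-H_{\a}\|_0=\|H_{\a}\circ H_{\a}^{-1}-H_{\a}\circ H_{\a,n}^{-1}\|_0\to 0$. Your proposed identification of the limit via passing to the limit in $H_{\a,n}\circ H_{\a,n}^{-1}=\operatorname{id}$ is an appealing alternative to this step, but it only becomes available after you already have uniform convergence of \emph{both} sequences --- which is exactly where your argument is short.
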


\begin{proof}
	Since $h_{q_n}$ is constructed as a lift of the homeomorphism $h_n$ by the $q_n$-fold cyclic covering map, we obtain 
	\[
	\norm{h_{q_n} - \operatorname{id}}_0 \leq q^{-1}_n \ \text{ as well as } \ \norm{h^{-1}_{q_n} - \operatorname{id}}_0 \leq q^{-1}_n.
	\]
	We use this to estimate
	\begin{equation*}
		\norm{H^{-1}_{\a ,n} - H^{-1}_{\a , n-1}}_0 = \norm{ (h^{-1}_{\a , n} - \operatorname{id})\circ H^{-1}_{\a , n-1}}_0 = \norm{h^{-1}_{\a , n}-\operatorname{id}}_0 \leq q^{-1}_n.
	\end{equation*} 
	Then we obtain
	\begin{equation*}
		\norm{ H^{-1}_{\a , n+k} - H^{-1}_{\a , n-1} }_0 \leq \sum^{k}_{\ell=0} \norm{H^{-1}_{\a , n+\ell}-H^{-1}_{\a , n+\ell-1}}_0 \leq \sum^k_{\ell=0} q^{-1}_{n+\ell}.
	\end{equation*}
	Since $\sum^{\infty}_{n=1}q^{-1}_n < \infty$ by Lemma \ref{lem:conv}, $\left(H^{-1}_{\a , n}\right)_{n \in \mathbb{N}}$ is a Cauchy sequence. This shows the uniform convergence of $\left(H^{-1}_{\a ,n}\right)_{n \in \mathbb{N}}$ to a continuous map $H^{-1}_{\a}: \mathbb{S}^1 \rightarrow \mathbb{S}^1$. Additionally, $H^{-1}_{\a}$ is monotone as a uniform limit of homeomorphisms. Due to the second part of Lemma \ref{lem:conv} the sequence of $C^{\infty}$-diffeomorphisms $\hat{T}_{\a , n}=H_{\a , n} \circ R_{\alpha} \circ H^{-1}_{\a , n}$ converges to a diffeomorphism $T_{\a}$ in the $\text{Diff}^{\infty}$-topology. Hereby, we obtain $H^{-1}_{\a} \circ T_{\a} = R_{\alpha} \circ H^{-1}_{\a}$. Altogether, we conclude that  $H^{-1}_{\a}$ is a homeomorphism.
	
	In the next step, we observe that
	\[
	\norm{H_{\a , n} -H_{\a}}_0 = \norm{\operatorname{id}-H_{\a}\circ H^{-1}_{\a,n}}_0 = \norm{H_{\a} \circ (H^{-1}_{\a}-H^{-1}_{\a , n})}_0 \to 0 \text{ as } n \to \infty
	\]
	by uniform continuity of $H_{\a}$. Altogether, $(H_{\a , n})_{n \in \N}$ converges in $d_0$ to the homeomorphism~$H_{\a}$. 
	
	For all $n \in \N$ we have $h_{\a , n}(0)=0$ and, hence, $H_{\a,n}(0)=0$. Thus, we also get $H_{\a}(0)=0$.
\end{proof}

We are now ready to conclude the proof of the main proposition.

\begin{proof}[Proof of Proposition \ref{prop:Main}]
	We define the map $\Psi$ by setting $\Psi(\a)=T_{\a}$, where $T_{\a} \in \mathcal{H}^{\infty}$ is the limit of the sequence of diffeomorphisms $(T_{\a ,n})_{n \in \N}$ constructed by \eqref{eq:AbC}. This limit exists by the first part of Lemma \ref{lem:conv}. The second part of Lemma \ref{lem:conv} and the convergence of $(H_{\a , n})_{n \in \N}$ to a homeomorphism~$H_{\a}$ yield that $T_{\a}$ has rotation number $\alpha$. Hence, $\Psi(\a)=T_{\a} \in \mathcal{H}^{\infty}_{\alpha}$ for all $\a \in \{0,1\}^{\N}$. Furthermore, the third part of Lemma \ref{lem:conv} proves the continuity of $\Psi$.
	
	Finally, we check that $\Psi$ satisfies properties (R1) and (R2). For this purpose, we examine the regularity of conjugacy between $T_{\a}$ and $T_{\b}$. Here, we note as a consequence of Lemma \ref{lem:ConjUniq} and Lemma \ref{lem:convH} that the sequence $(H_{\b , n} \circ H^{-1}_{\a , n})_{n \in \N}$ converges to the unique conjugating homeomorphism $G_{\a , \b} \coloneqq H_{\b} \circ H^{-1}_{\a}$ between $T_{\a}$ and $T_{\b}$ with $G_{\a , \b}(0)=0$.
	\begin{enumerate}
		\item[(R1)] Let $\a=(a_n)_{n\in \N}$ and $\b =(b_n)_{n\in \N}$ be sequences such that there is $N \in \N$ with $a_n =b_n$ for every $n \geq N$. Then
		\begin{equation*}
		H_{\b , n} \circ H^{-1}_{\a , n}  = H_{\b,N} \circ h_{\b, N+1} \circ \dots \circ  h_{\b , n} \circ h^{-1}_{\a, n} \circ \dots \circ h^{-1}_{\a , N+1} \circ H^{-1}_{\a , N} = H_{\b,N}H^{-1}_{\a,N}
		\end{equation*}
		for every $n \geq N$. Hence,		
		\[
		G_{\a , \b} = H_{\b}\circ H^{-1}_{\a} = H_{\b , N} \circ H^{-1}_{\a , N} \in \mathcal{H}^{\infty}
		\]
		is the unique conjugating homeomorphism between $T_{\a}$ and $T_{\b}$ satisfying $G_{\a , \b}(0)=0$, that is, $T_{\a}$ and $T_{\b}$ are $C^{\infty}$-conjugate. 
		\item[(R2)] Let $\a=(a_n)_{n\in \N}$ and $\b =(b_n)_{n\in \N}$ be sequences such that there are infinitely many $n \in \N$ with $a_n \neq b_n$. We denote the set of these indices with $a_n \neq b_n$ by $\mathcal{N}$. Furthermore, we recall the definition of intervals $\hat{I}_t$ and $\hat{J}_t$ from equations \eqref{eq:Slope} and \eqref{eq:SlopeInv}. Using them, we introduce the following sets $K_n$ for each $n \in \N$:
		\begin{itemize}
			\item If $a_n = 1$, let $k_{n,1} \in \N$ be the smallest integer and $k_{n,2} \in \N$ be the largest integer such that
			\[
			\left[\frac{k_{n,1}}{q_{n+1}}, \frac{k_{n,2}}{q_{n+1}} \right] \subseteq \left[ \frac{1}{2q_n}-\frac{t_n}{4q_n}, \frac{1}{2q_n}+ \frac{t_n}{4q_n} \right].
			\] 
			Then we define 
			\[
			K_n \coloneqq \bigcup^{q_n -1}_{\ell=0} \left[\frac{\ell}{q_n}+\frac{k_{n,1}}{q_{n+1}}, \frac{\ell}{q_n}+\frac{k_{n,2}}{q_{n+1}} \right]
			\]
			in order to have $h_{\a , n}(K_n) \subseteq \pi^{-1}_{q_n}(\hat{I}_{t_n})$.
			\item If $a_n = 0$, let $k_{n,1} \in \N$ be the smallest integer and $k_{n,2} \in \N$ be the largest integer such that
			\[
			\left[\frac{k_{n,1}}{q_{n+1}}, \frac{k_{n,2}}{q_{n+1}} \right] \subseteq \left[ \frac{3t_n}{q_n}-\frac{t_n}{8q_n}, \frac{3t_n}{q_n}+ \frac{t_n}{8q_n} \right].
			\] 
			Then we define 
			\[
			K_n \coloneqq \bigcup^{q_n -1}_{\ell=0} \left[\frac{\ell}{q_n}+\frac{k_{n,1}}{q_{n+1}}, \frac{\ell}{q_n}+\frac{k_{n,2}}{q_{n+1}} \right]
			\]
			in order to have $h_{\a , n}(K_n) \subseteq \pi^{-1}_{q_n}(\hat{J}_{t_n})$.
		\end{itemize}
	In both cases, we call such an interval
	$$\left[\frac{\ell}{q_n}+\frac{k_{n,1}}{q_{n+1}}, \frac{\ell}{q_n}+\frac{k_{n,2}}{q_{n+1}} \right]$$ a component of $K_n$. These choices quarantee that $h_{\b , n} \circ h^{-1}_{\a, n}|_{h_{\a , n}(K_n)}$ has slope $t^{-2}_n$ for $n \in \mathcal{N}$. We are now ready to show that $G_{\a , \b}$ is not $d$-Hölder continuous for any $d \in (0,1)$. 
	
	For every $i \in \N$ and any component $\tilde{K}_i$ of $K_i$ there is a component $\tilde{K}_{i+1}$ of $K_{i+1}$ such that $h_{\a , i+1}\left(\tilde{K}_{i+1}\right)\subseteq \tilde{K}_i$. This proves the existence of a component $\tilde{K}_n$ of $K_n$ such that $H_{\a , n}|_{\tilde{K}_n}$ is an affine transformation of slope $\prod^n_{i=1}t_i$ and $H_{\b , n} \circ H^{-1}_{\a , n}|_{H_{\a , n}(\tilde{K}_n)}$ is an affine transformation of slope 
	\begin{equation*}
		\prod_{i \in \mathcal{N}, i\leq n } t^{-2}_i.
	\end{equation*}
	In the following, we denote $\tilde{K}_n = [x',y']$. In particular, we have $\abs{y'-x'}\geq 8^{-1}t_nq^{-1}_n$. Moreover, we define $x,y \in \mathbb{S}^1$ by $x=H_{\a , n}(x')$ and $y=H_{\a , n}(y')$. We also introduce the notation $G^{(n+1)}_{\a , \b}=H^{-1}_{\b , n}G_{\a , \b}H_{\a , n}$. We observe that $G^{(n+1)}_{\a , \b}$ is the uniform limit of $h_{\b , n+1} \circ \dots \circ h_{\b , n+m} \circ h^{-1}_{\a, n+m}\circ \dots \circ h^{-1}_{\a , n+1}$ as $m \rightarrow \infty$. Since the maps $h_i$ are $q^{-1}_i$-cyclic, we have $x',y' \in \operatorname{Fix}(h_i)$ for all $i>n$. Thus, we obtain
	\begin{align*}
	\abs{G_{\a , \b}(x)-G_{\a, \b}(y)}& =\abs{ H_{\b , n}\left(G^{(n+1)}_{\a , \b}(x')\right) - H_{\b , n}\left(G^{(n+1)}_{\a , \b}(y')\right) } = \abs{ H_{\b , n} (x') - H_{\b , n}(y') } \\ 
	& =\prod_{i \in \mathcal{N}, i\leq n } t^{-2}_i \cdot \abs{x-y}. 
\end{align*}
	
	Suppose $n \in \mathcal{N}$. Then we obtain the following estimate using the properties of the domain $K_n$ as well as equation \eqref{eq:t}: 
\begin{align*}
	\frac{\abs{G_{\a , \b}(x)-G_{\a, \b}(y)}}{\abs{x-y}^{\frac{1}{n}}} & = \prod_{i \in \mathcal{N}, i\leq n } t^{-2}_i \cdot \abs{x-y}^{1- \frac{1}{n}} \\
	& = \prod_{i \in \mathcal{N}, i\leq n } t^{-2}_i \cdot \left( \prod^n_{i=1} t_i \right)^{1- \frac{1}{n}} \cdot \abs{x'-y'}^{1- \frac{1}{n}} \\
	& \geq \prod_{i \in \mathcal{N}, i\leq n } t^{-2}_i \cdot \left( \prod^n_{i=1} t_i \right)^{1- \frac{1}{n}} \cdot \left( 8^{-1}t_nq^{-1}_n \right)^{1- \frac{1}{n}} \\
	& \geq 8^{-1} \cdot \prod_{i \in \mathcal{N}, i < n } t^{-2}_i \cdot \left( \prod^{n-1}_{i=1} t_i \right)^{1- \frac{1}{n}} \cdot \left( t_n  \right)^{-\frac{2}{n}} \cdot \left(q_n\right)^{\frac{1}{n}-1} \\
	& = 8^{-1} \cdot \prod_{i \in \mathcal{N}, i < n } t^{-2}_i \cdot \left( \prod^{n-1}_{i=1} t_i \right)^{1- \frac{1}{n}} \cdot \left(q_n\right)^{3-\frac{1}{n}}.
\end{align*}
We also recall the condition $q_{i+1} \geq q^{2 \cdot i}_{i}$ for every $i \in \N$ from \eqref{eq:qGrowth}. This implies 
\[
\prod^{n-1}_{i=1}q^{2i-1}_i \leq q_n
\] 
and, hence, 
\[
\prod^{n-1}_{i=1} t_i = \left( \prod^{n-1}_{i=1}q^{2i-1}_i \right)^{-1} \geq q^{-1}_n,
\]
where we used the definition of $t_i$ from equation \eqref{eq:t}. Altogether, we obtain
\[
\frac{\abs{G_{\a , \b}(x)-G_{\a, \b}(y)} }{\abs{x-y}^{\frac{1}{n}}} \geq q_n.
\]
Since there are infinitely many $n \in \mathcal{N}$, we conclude that $G_{\a , \b}$ cannot be $d$-H\"older for any $d \in (0,1)$. This finishes the proof of property (R2).
	\end{enumerate}
\end{proof}

\subsection{Absolute continuous and singular invariant measures}\label{subsec:Abs}
It is a well-known fact that every circle homeomorphism $T$ with irrational rotation number is uniquely ergodic (see \cite[Theorem 11.2.9]{HK}). We denote the unique $T$-invariant probability measure by $\mu_T$. If $T=H \circ R_{\alpha} \circ H^{-1}$ with unique $H \in \mathcal{H}$ satisfying $H(0)=0$, then $\mu_T$ is given by $\mu_T = H_{\ast}m$, where $m$ is the Lebesgue measure on $\mathbb{S}^1$. 

Moreover, \cite[Theorem VII.1.4]{He} shows that $T \in \mathcal{H}^2$ with $\tau(T) \notin \Q$  is ergodic with respect to Lebesgue measure $m$. Hence, any $T$-invariant
Borel subset is either $m$-null or $m$-conull. Together with the uniqueness of $\mu_T$, this implies that either $\mu_T$ is equivalent to $m$ or singular to $m$. 

If $\mu_T$ is equivalent to $m$, then the unique conjugating homeomorphism $H$ with $H(0)=0$ maps any Lebesgue null set to a null set. In this case, the conjugacy is called \emph{absolutely continuous}. If $\mu_T$ is singular to $m$, then conjugacy $H$ maps some Lebesgue null set to a conull set. In this case, the conjugacy is called \emph{singular}. 

In this section, we show that both cases can be realized in the setting of our Main Theorem~\ref{theo:A}. We start by showing that the conjugation maps as defined in Section \ref{subsec:conj} result in a singular conjugacy $H_{\a}$. Here, we follow an approach similar to \cite[section 3]{M} and \cite[Lemma 1.1]{KuCircle}.
\begin{lemma}
	$H_{\a}$ is singular.
\end{lemma}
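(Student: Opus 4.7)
The plan is to exhibit a Borel set $V\subset \S^1$ with $m(V)=0$ and $\mu_{T_\a}(V)=1$, where $\mu_{T_\a}=(H_\a)_\ast m$ is the unique $T_\a$-invariant probability measure. By the remarks immediately preceding the lemma, this is exactly what singularity of $H_\a$ means. The set $V$ will be the $\limsup$ of sets $V_n := H_{\a,n}(W_n)$, where $W_n\subset \S^1$ is the ``slow'' part of $h_{\a,n}$: namely the region on which $h_{\a,n}$ compresses Lebesgue measure by a factor $t_n$ (if $a_n=0$) or on which its inverse expands by $t_n^{-1}$ (if $a_n=1$).

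Reading off the explicit formula for $\hat h_{t_n}$, the union of its two affine pieces of slope $t_n$ on $[0,1]$ has Lebesgue measure at least $1-10t_n$ and image of Lebesgue measure at most $\tfrac{3}{2}t_n$. Lifting by the $q_n$-fold cover yields a set $W_n$ consisting of $2q_n$ intervals with $m(W_n)\geq 1-10t_n$ and $m(h_{\a,n}(W_n))\leq \tfrac{3}{2}t_n$ (the case $a_n=1$ is symmetric, with the roles of domain and image exchanged). Since $H_{\a,n-1}$ is Lipschitz with constant $L_{n-1}:=\vertiii{H_{\a,n-1}}_1$, a quantity depending only on $q_1,\dots,q_{n-1}$, we have $m(V_n)\leq \tfrac{3}{2}L_{n-1}t_n$. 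As Lemma \ref{lem:conv} imposes only finitely many lower bounds on each $q_n$, we may strengthen the choice of $q_n$ to also force $L_{n-1}t_n\leq n^{-2}$, so that $\sum_n m(V_n)<\infty$ and $m(\limsup_n V_n)=0$ by Borel--Cantelli.

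The heart of the argument is the lower bound $\mu_{T_\a}(V_n)\to 1$. Decompose $H_\a = H_{\a,n}\circ \tilde H_n$, where $\tilde H_n$ is the uniform limit of $h_{\a,n+1}\circ\cdots\circ h_{\a,k}$; its existence follows by the same telescoping estimate used in Lemma \ref{lem:convH}. Then
\[
\mu_{T_\a}(V_n) \;=\; m\bigl(H_\a^{-1}(V_n)\bigr) \;=\; m\bigl(\tilde H_n^{-1}(W_n)\bigr).
\]
Since each $h_{\a,k}$ is $(1/q_k)$-cyclic, $\|h_{\a,k}-\operatorname{id}\|_0\leq q_k^{-1}$, and composing gives $\|\tilde H_n-\operatorname{id}\|_0\leq \sum_{k>n}q_k^{-1}\leq 2q_{n+1}^{-1}\leq 2q_n^{-2n}$ by \eqref{eq:qGrowth}. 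Because $W_n$ is a disjoint union of $2q_n$ intervals whose mutual gaps are of order $t_n/q_n\gg 2q_n^{-2n}$ for large $n$, applying $\tilde H_n^{-1}$ displaces each endpoint by at most $\|\tilde H_n-\operatorname{id}\|_0$, so the intervals stay disjoint and
\[
m\bigl(\tilde H_n^{-1}(W_n)\bigr) \;\geq\; m(W_n) - 4q_n\,\|\tilde H_n-\operatorname{id}\|_0 \;\geq\; 1-10t_n - 8q_n^{1-2n},
\]
which tends to $1$.

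Setting $V:=\limsup_n V_n$ then gives $m(V)=0$, while for each $N$, $\mu_{T_\a}\bigl(\bigcup_{n\geq N}V_n\bigr)\geq \sup_{n\geq N}\mu_{T_\a}(V_n)=1$, so by continuity of $\mu_{T_\a}$ along the decreasing sequence $\bigcup_{n\geq N}V_n$ we obtain $\mu_{T_\a}(V)=1$. Thus $\mu_{T_\a}\perp m$ and $H_\a$ is singular. The main obstacle is the lower bound on $\mu_{T_\a}(V_n)$: one must play the super-polynomial growth of $q_n$ from \eqref{eq:qGrowth} off against the merely $O(q_n)$-many endpoints that make up $W_n$, in order to guarantee that the tail homeomorphism $\tilde H_n$ barely distorts the Lebesgue measure of the good set.
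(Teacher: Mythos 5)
Your argument is correct, and it reaches the same conclusion as the paper by a genuinely different route.\ Both proofs rest on the same geometric fact --- that $h_{\a,n}$ has a large slow region $W_n$ ($m(W_n)\geq 1-O(t_n)$) mapping onto a set of measure $O(t_n)$ --- but they package it differently.\ The paper works with the \emph{good} set $C=\bigcap_n L_n$ on the domain side.\ Its $L_n$ is deliberately truncated so that every endpoint lies in $(q_{n+1}^{-1}\Z)/\Z$, which is \emph{exactly} fixed by all later maps $h_{\a,j}$, $j>n$; this exact invariance is what lets the paper push $C$ through $H_\a$ and bound $m(H_\a(C_n))\leq\prod_{i<n}t_i$ without any further hypothesis on $q_n$.\ You instead work with the \emph{bad} sets $V_n=H_{\a,n}(W_n)$, take $V=\limsup_n V_n$, and control the tail not by exact invariance but by the soft estimate $\lVert\tilde H_n^{-1}-\operatorname{id}\rVert_0\leq\sum_{k>n}q_k^{-1}\ll q_n^{-1}t_n$, which dominates the $O(q_n)$ endpoints of $W_n$.\ This avoids the endpoint bookkeeping entirely, at the cost of (i) an extra inflation of $q_n$ to make $\sum_nL_{n-1}t_n<\infty$ for Borel--Cantelli (a modification the paper never needs) and (ii) a slight wrinkle in the telescoping estimate: the sum $\sum_{k>n}q_k^{-1}$ telescopes cleanly for $\tilde H_n^{-1}=\lim_m h_{\a,n+m}^{-1}\circ\cdots\circ h_{\a,n+1}^{-1}$, since each step is $(h_{\a,k}^{-1}-\operatorname{id})\circ(\text{bijection})$; the bound for $\tilde H_n$ itself then follows from $\lVert f-\operatorname{id}\rVert_0=\lVert f^{-1}-\operatorname{id}\rVert_0$ for circle homeomorphisms, a point worth stating explicitly.\ Your remark about the ``mutual gaps'' being $\gg q_n^{-2n}$ is not quite the relevant condition --- disjointness of the images is automatic since $\tilde H_n^{-1}$ is a homeomorphism --- what you actually need is that each component of $W_n$ has length exceeding $2\lVert\tilde H_n^{-1}-\operatorname{id}\rVert_0$, which holds for the same reason.\ Net result: the paper's proof is tighter and needs no modification of the construction; yours is a little more robust (it does not care where the endpoints of $W_n$ sit) and more modular, but relies on further enlarging $q_n$.
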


\begin{proof}
Let $n \in \N$ and suppose that $a_n =0$. Then we let $\ell_{n,1}, \ell_{n,3} \in \N$ be the smallest positive integers and $\ell_{n,2}, \ell_{n,4} \in \N$ be the largest positive integers such that
\[
\left[ \frac{\ell_{n,1}}{q_{n+1}},\frac{\ell_{n,2}}{q_{n+1}}\right] \subseteq \left[\frac{2t_n}{q_n}, \frac{1-t_n+8t^2_n}{2q_n}\right] \ \text{ and } \ \left[ \frac{\ell_{n,3}}{q_{n+1}},\frac{\ell_{n,4}}{q_{n+1}}\right] \subseteq \left[ \frac{1+t_n-8t^2_n}{2q_n} , \frac{1-2t_n}{q_n} \right].
\]
Then we define the set 
\begin{equation*}
	L_n= \bigcup^{q_n -1}_{k=0}\left[ \frac{k}{q_n}+ \frac{\ell_{n,1}}{q_{n+1}},\frac{k}{q_n}+\frac{\ell_{n,2}}{q_{n+1}}\right] \cup  \left[\frac{k}{q_n}+ \frac{\ell_{n,3}}{q_{n+1}}, \frac{k}{q_n}+\frac{\ell_{n,4}}{q_{n+1}}\right].
\end{equation*}
By construction, $h_{\a , n}|_{L_n}$ is an affine transformation with slope $t_n$. Furthermore, we introduce the set $M_n$ defined by
\begin{equation*}
	\bigcup^{q_n -1}_{k=0}\left[ \frac{k+3t_n}{q_n}- \frac{t_n-8t^2_n}{4q_{n}},\frac{k+3t_n}{q_n}+\frac{t_n-2t^2_n}{4q_{n}}\right] \cup  \left[\frac{k+1-3t_n}{q_n}- \frac{t_n-2t^2_n}{4q_{n}}, \frac{k+1-3t_n}{q_n}+\frac{t_n-8t^2_n}{4q_{n}}\right].
\end{equation*}
Due to $t_n < 1$ we have $M_n \subset L_n$. Additionally, we observe $h_{\a , n} \left(L_n\right) \subseteq M_n$. By direct computation, we also obtain $m\left(L_n\right) \geq 1-6t_n$ and $m\left(M_n\right) \leq t_n$, where $m$ stands for the Lebesgue measure.

In an analogous manner, we handle the case of $a_n=1$ and define sets $L_n$ and $M_n$ with $m\left(L_n\right) \geq 1-6t_n$ and $m\left(M_n\right) \leq t_n$ in such a way that $h_{\a , n}|_{L_n}$ is an affine transformation with slope $t_n$ as well as $h_{\a , n} \left(L_n\right) \subseteq M_n$.

	Let $C_n = \bigcap^n_{i=1} L_i$ and $C = \bigcap^{\infty}_{i=1} L_i$. Since each component of $L_i$ is an interval with endpoints in $(q^{-1}_{i+1}\Z)/\Z$, we have
	\begin{equation*}
		m \left( C_n \right) \geq \prod^n_{i=1} \left(1-6t_i \right).
	\end{equation*}
	By definition of $t_n$ from equation \eqref{eq:t} we obtain 
	\begin{equation*}
		\sum^{\infty}_{n=1} t_n = \sum^{\infty}_{n=1} q^{1-2n}_n< \infty
	\end{equation*}
	and, hence, $\mu_T(H_{\a}(C)) = m(C)>0$. 
	
	By construction of $h_{\a , j}$ with the aid of the $q_j$-fold covering map, $\left(q^{-1}_{n+1}\mathbb{Z}\right) / \mathbb{Z}$ is pointwise fixed under $h_{\a , j}$, $j>n$. Since any component of $L_{n}$ is an interval with endpoints in $\left(q^{-1}_{n+1}\mathbb{Z}\right) / \mathbb{Z}$, we get $h_{\a , j}(L_n)=L_n$ for $j>n$. Then we obtain for any $j>n$ that
	\begin{equation*}
		H_{\a , j}(L_n) = H_{\a , n}(L_n)= H_{\a , n-1}h_{\a , n}(L_n) \subseteq H_{\a , n-1}(M_n).
	\end{equation*}
	This yields $L_n \subseteq H^{-1}_{\a , j}H_{\a , n-1}(M_n)$, where $H_{\a , 0} = \operatorname{id}$. Thus, the uniform limit $H^{-1}_{\a}$ of $H^{-1}_{\a , j}$ satisfies $L_n \subseteq H^{-1}_{\a}\left( H_{\a , n-1}(M_n)\right)$ for any $n \in \mathbb{N}$. Hereby, we observe
	\begin{equation*}
		H_{\a}\left(C_n\right) = H_{\a} \left(\bigcap^ n_{i=1} L_i \right) = \bigcap^ n_{i=1}  H_{\a} \left(L_i \right) \subseteq \bigcap^ n_{i=1}  H_{\a , i-1} \left(M_i \right).
	\end{equation*}
	In order to have $H_{\a , i-1}\left(A\right) \subset H_{\a , i-2}\left(M_{i-1}\right)$ for a set $A\subset \mathbb{S}^1$, this set $A$ has to satisfy $h_{\a , i-1}\left(A\right) \subset M_{i-1}$ which implies the condition $A \subset L_{i-1}$. Since $m(M_i) \leq t_i$ and the slope of $h_{i-1}|_{L_{i-1}}$ is equal to $t_{i-1}$, this yields $m\left(H_{\a}\left(C_n\right)\right) \leq \prod^{n-1}_{i=1} t_i$ which converges to $0$ as $n \rightarrow \infty$. Therefore, $m(H_{\a}(C))= 0$. 
	
	Altogether, we conclude that $\mu_T$ is not equivalent to $m$ because $\mu_T(H_{\a}(C))>0$ and $m(H_{\a}(C))=0$. Hence, $H_{\a}$ is a singular map. 
\end{proof}

In the next step, we present some modifications to the construction of the conjugation maps $h_{\a , n}$ which will allow us to produce absolute continuous conjugacies. For this purpose, we recall that the map $\hat{h}_t:[0,1] \to [0,1]$ from Section \ref{subsec:conj} coincides with the identity in a neighborhood of the boundary. Using the maps $C_n: \left[0, \frac{1}{2^{n+1}}\right] \rightarrow [0,1]$, $C_n(x)=2^{n+1} \cdot x$, we construct the orientation-preserving circle diffeomorphism $\breve{h}_{t_n}$ as follows:
\begin{equation*}
	\breve{h}_{t_n}(x) = \begin{cases}
		C^{-1}_n \circ \hat{h}_{t_n} \circ C_n(x) & \text{ if } x \in \left[0, \frac{1}{2^{n+1}}\right] \\
		x & \text{ if } x \in \left[ \frac{1}{2^{n+1}}, 1\right]
	\end{cases} ,
\end{equation*}
where we define the number $t_n$ as in the previous section. 

Let $\overline{h}_{q_n}$ be the lift of $\breve{h}_{t_n}$ by the cyclic $q_n$-fold covering map $\pi_{q_n}$ such that $\operatorname{Fix}(\overline{h}_{q_n}) \neq \emptyset$. Finally, we define the conjugation maps $\overline{h}_{\a , n}$ using the map $\overline{h}_{q_n}$ instead of $h_{q_n}$. By the same reasoning as above, one can show that the sequence $\left(\overline{H}_{\a , n} \right)_{n \in \N}$ converges to a homeomorphism $\overline{H}_{\a}$ and that the limit diffeomorphisms $\overline{T}_{\a} = \overline{H}_{\a} \circ R_{\alpha} \circ \overline{H}^{-1}_{\a}$ satisfy Proposition \ref{prop:Main} as well.

We now prove the absolute continuity of $H$ by the same method as in \cite[section 4]{M} and \cite[Lemma 2.7]{KuCircle}.

\begin{lemma}
	$H_{\a}$ is absolutely continuous.
\end{lemma}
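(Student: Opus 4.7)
My plan is to verify Luzin's condition (N) for $\overline{H}_{\a}$---every Lebesgue null set is mapped to a null set---which, for a monotone circle homeomorphism, is equivalent to absolute continuity (and, in combination with the Herman ergodicity dichotomy recalled at the start of Section \ref{subsec:Abs}, excludes the singular case). The key new feature to exploit is that the support $S_n := \{x \in \mathbb{S}^1 : \overline{h}_{\a,n}(x) \neq x\}$ of $\overline{h}_{\a,n}$ sits inside $\pi_{q_n}^{-1}([0, 2^{-(n+1)}])$, so $m(S_n) \leq 2^{-(n+1)}$. I would first define $B_n := \bigcap_{j > n}(\mathbb{S}^1 \setminus S_j)$ and $E := \bigcap_n B_n^c = \limsup_j S_j$; by Borel--Cantelli $m(E) = 0$. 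For $x \in B_n$ we have $\overline{h}_{\a,j}(x) = x$ for every $j > n$, hence $\overline{H}_{\a,k}(x) = \overline{H}_{\a,n}(x)$ for all $k \geq n$, so $\overline{H}_{\a}(x) = \overline{H}_{\a,n}(x)$. Thus $\overline{H}_{\a}|_{B_n}$ coincides with the restriction of the $C^{\infty}$ diffeomorphism $\overline{H}_{\a,n}$ and maps null subsets of $B_n$ to null sets. Combined with $m(\bigcup_n B_n) = 1$, this reduces Luzin's (N) to the claim $m(\overline{H}_{\a}(E)) = 0$, and since $\overline{H}_{\a}(B_n) = \overline{H}_{\a,n}(B_n)$, further to $m(\overline{H}_{\a,n}(B_n^c)) \to 0$.

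Next I would bound $m(\overline{H}_{\a,n}(B_n^c)) \leq \sum_{j > n} m(\overline{H}_{\a,n}(S_j))$ and estimate each term. For $j > n$ the set $S_j$ is a disjoint union of $q_j$ equispaced intervals of length $\delta_j \leq 2^{-(j+1)}/q_j$. The mean value theorem on each component gives $m(\overline{H}_{\a,n}(S_j)) = \delta_j \sum_{\ell = 0}^{q_j - 1} D\overline{H}_{\a,n}(\xi_\ell)$, which is $\delta_j$ times a Riemann sum of width $1/q_j$ for $\int_0^1 D\overline{H}_{\a,n}\,dm = 1$; hence
\[
m(\overline{H}_{\a,n}(S_j)) \leq 2^{-(j+1)}\bigl(1 + \|D^2 \overline{H}_{\a,n}\|_\infty / q_j\bigr).
\]
Setting $\hat{L}_n := \max_{(a_1,\ldots,a_n) \in \{0,1\}^n} \|D^2 \overline{H}_{\a,n}\|_\infty$ (finite, as it depends on at most $2^n$ prefixes), I would strengthen the inductive choice of $q_{n+1}$ in the analogue of Lemma \ref{lem:conv} by the additional requirement $q_{n+1} \geq \hat{L}_n$---a constraint compatible with the Liouville approximation and with the growth $q_{n+1} \geq q_n^{2n}$. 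This yields $m(\overline{H}_{\a,n}(S_j)) \leq 2^{-j}$ for all $j > n$, so $m(\overline{H}_{\a,n}(B_n^c)) \leq 2^{-n} \to 0$, completing the argument.

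The main obstacle is that $\|D\overline{H}_{\a,n}\|_\infty$ grows very rapidly---of order $\prod_{k=1}^n t_k^{-1}$---so the naive bound $m(\overline{H}_{\a,n}(S_j)) \leq \|D\overline{H}_{\a,n}\|_\infty \cdot m(S_j)$ is useless. The Riemann-sum argument above circumvents this via equidistribution of $S_j$: because the $q_j$ components of $S_j$ are equispaced and $q_j$ can be chosen huge compared to $n$-dependent quantities, $\overline{H}_{\a,n}(S_j)$ samples $D\overline{H}_{\a,n}$ essentially uniformly and therefore sees only its \emph{mean}, which equals $1$. Restricting the support of $\breve{h}_{t_n}$ to $[0, 2^{-(n+1)}]$---a measure-theoretic smallness condition absent in the singular construction---is precisely what converts the factor $2^{-(j+1)}$ from a constant into the summable geometric weight needed to force the sum to $0$.
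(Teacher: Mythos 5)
Your plan is correct, but it is genuinely different from---and noticeably heavier than---the paper's argument. The paper observes that the set $X := \bigcap_{n\ge 1}\overline{L}_n$ on which \emph{every} $\overline{h}_{\a,n}$ acts trivially has $m(X)\ge\frac{1}{2}$, so $\overline{H}_{\a}|_X=\operatorname{id}$ and consequently $\mu_{\overline{T}_{\a}}(B\cap X)=m(B\cap X)$ for every Borel $B$. This single qualitative fact already contradicts singularity of $\mu_{\overline{T}_{\a}}$: a set $B$ with $m(B)=1$, $\mu_{\overline{T}_{\a}}(B)=0$ would give $m(B\cap X)=m(X)>0$ yet $\mu_{\overline{T}_{\a}}(B\cap X)=0$. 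Herman's ergodicity dichotomy then forces equivalence, and the proof is done---no quantitative bounds on $\overline{H}_{\a,n}$ beyond $m(\overline{L}_n)$, and no changes to the construction. Your route instead verifies Luzin's (N) for $\overline{H}_{\a}$ directly: you decompose $\mathbb{S}^1$ as $\bigcup_n B_n\cup E$ with $E=\limsup_j S_j$ null by Borel--Cantelli, use $\overline{H}_{\a}|_{B_n}=\overline{H}_{\a,n}|_{B_n}$ to dispatch the smooth part, and control $m(\overline{H}_{\a,n}(B_n^{c}))$ by a Riemann-sum equidistribution estimate on the $q_j$ equispaced components of $S_j$. The estimate $m(\overline{H}_{\a,n}(S_j))\le 2^{-(j+1)}\bigl(1+\|D^2\overline{H}_{\a,n}\|_\infty/q_j\bigr)$ is right, and the logic $m \ll \mu_{\overline{T}_{\a}}$ plus the dichotomy does yield equivalence. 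The cost is that you need the \emph{additional} inductive constraint $q_{n+1}\ge \hat{L}_n$, i.e.\ a modification of the construction parameters; while this is compatible with the Liouville approximation and with $q_{n+1}\ge q_n^{2n}$, the paper's argument applies to the construction exactly as given. Your approach does have the merit of making explicit \emph{why} the measure-theoretic smallness $m(S_j)\le 2^{-(j+1)}$ turns the tide (geometric weights beating the wild $D\overline{H}_{\a,n}$ growth via equidistribution), whereas the paper's argument hides all of that behind the dichotomy.
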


\begin{proof}
	We introduce the sets
	\begin{equation*}
		\breve{L}_n = \left[ 2^{-(n+1)},1\right] \ \text{ and } \ \overline{L}_n = \pi^{-1}_{q_n}\left(\breve{L}_n\right).
	\end{equation*}
	According to our construction $\overline{h}_{\a , n}$ is the identity on $\overline{L}_n$. Let $X= \bigcap^{\infty}_{n=1} \overline{L}_n$. Then we have
	\begin{equation*}
		m(X) \geq 1 - \sum^{\infty}_{n=1} m\left(\mathbb{S}^1 \setminus \overline{L}_n \right) = 1 - \sum^{\infty}_{n=1} 2^{-(n+1)} = \frac{1}{2}.
	\end{equation*}
	Since $\overline{H}_{\a}$ is the identity on the positive measure set $X$, we have for any Borel set $B$ that $\mu_{\overline{T}_{\a}}(B \cap X) = m(B \cap X)$ and $\mu_{\overline{T}_{\a}}(X)=m(X)  > 0$. 
	
	Assume that $\mu_{\overline{T}_{\a}}$ is not equivalent to $m$. Then $\mu_{\overline{T}_{\a}}$ is singular to $m$ and there is a Borel set $B \subseteq \mathbb{S}^1$ such that $m(B)=1$ and $\mu_{\overline{T}_{\a}}(B)=0$. But then we obtain the contradiction $m(B \cap X) = m(X) >0$ but $\mu_{\overline{T}_{\a}}(B \cap X) \leq \mu_{\overline{T}_{\a}}(B) =0$. Hence, $H_{\a}$ is absolutely continuous.
\end{proof}
	
	\section{Higher rank actions} \label{sec:Higher}
	In this final section we turn to actions by $\Z^d$, $d\geq 2$, on the circle. We obtain a generalization of our Main Theorem \ref{theo:A}.
	
	\begin{maintheorem}
		Let $(D)$ be some degree of regularity from Hölder to $C^{\infty}$ and $\mathcal{D}$ be the collection of orientation-preserving circle homeomorphisms with regularity $(D)$. Then there is no complete numerical Borel invariant for $\mathcal{D}$-conjugacy of free $\Z^d$ actions by orientation-preserving $C^{\infty}$ diffeomorphisms of the circle. 
	\end{maintheorem}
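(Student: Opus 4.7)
The plan is to extend Proposition \ref{prop:Main} by producing a continuous reduction $\Psi:\a\mapsto\rho_{\a}$ from $E_0$ to $\mathcal{D}$-conjugacy of $\Z^d$-actions by $C^{\infty}$ diffeomorphisms of $\S^1$; Theorem \ref{thm:Crit} then yields the theorem. The construction mirrors Section \ref{sec:constr} but advances $d$ rotation parameters in parallel.

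I would first build, inductively along with the denominators $q_n$, irrationals $\alpha_1,\ldots,\alpha_d$ with $1,\alpha_1,\ldots,\alpha_d$ linearly independent over $\Q$, chosen so that each $\alpha_i$ admits very good rational approximants $p_i^{(n)}/q_n$ with a \emph{common} denominator $q_n$ satisfying the Liouville-type bound of Lemma \ref{lem:conv} uniformly in $i=1,\ldots,d$ (additionally imposing $q_n\mid q_{n+1}$ is convenient below and compatible with the free choice of $\alpha_i$ at each stage). Reusing the conjugation maps $H_{\a,n}$ of Section \ref{subsec:conj}, set
\[
T_{\a,n}^{(i)} \coloneqq H_{\a,n}\circ R_{p_i^{(n+1)}/q_{n+1}}\circ H_{\a,n}^{-1},\qquad i=1,\ldots,d.
\]
By \eqref{eq:equiv}, $h_{\a,n}$ commutes with every $R_{k/q_n}$, so the maps $T_{\a,n}^{(i)}$ pairwise commute at each finite stage. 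Running the convergence argument of Lemma \ref{lem:conv} separately in each index yields $C^{\infty}$-limits $T_{\a}^{(i)}\in\mathcal{H}^{\infty}_{\alpha_i}$ which again pairwise commute and share the topological conjugator $H_{\a}$. The resulting action $\rho_{\a}(\mathbf{n})=H_{\a}\circ R_{n_1\alpha_1+\cdots+n_d\alpha_d}\circ H_{\a}^{-1}$ is free by rational independence (every nontrivial $\rho_{\a}(\mathbf{n})$ is topologically conjugate to an irrational rotation, hence has no fixed points), and $\Psi$ is continuous by the analog of Lemma \ref{lem:conv}(3). Property (R1) is immediate: if $a_n=b_n$ for all $n\ge N$, then $H_{\b,N}\circ H_{\a,N}^{-1}$ is a $C^{\infty}$ diffeomorphism that simultaneously conjugates each $T_{\a}^{(i)}$ to $T_{\b}^{(i)}$.

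The main obstacle is the analog of (R2). Given a homeomorphism $g$ conjugating $\rho_{\a}$ to $\rho_{\b}$, restriction to $T_{\a}^{(1)}$ together with Lemma \ref{lem:ConjRot} force $g=H_{\b}\circ R_\theta\circ H_{\a}^{-1}$ for some $\theta\in\S^1$; the same $\theta$ works for every other generator because they share the conjugators $H_{\a},H_{\b}$. I would then adapt the slope blow-up of Proposition \ref{prop:Main}(R2) \emph{uniformly in $\theta$}. After the divisibility $q_n\mid q_{n+1}$, both the sets $K_n$ and the ``large-slope'' region of $H_{\b,n}$ (where its derivative equals $\prod_{i=1}^n t_i^{-1}$) are genuinely $1/q_n$-periodic, so the geometric question reduces to the residue $\theta\bmod 1/q_n$; within each fundamental domain the large-slope region covers everything up to a boundary layer of relative width $O(t_n)$. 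Hence for every $\theta$ and sufficiently large $n\in\mathcal{N}$, possibly after adding a little redundancy to the primitive $\hat h_{t_n}$ (for example installing a second wide affine middle regime offset by $1/(2q_n)$, so that every residue modulo $1/q_n$ is covered by at least one target), the translate $\tilde K_n+\theta$ lies inside the large-slope region of $H_{\b,n}$. The computation of Proposition \ref{prop:Main}(R2) then produces $|g(x)-g(y)|/|x-y|^{1/n}\ge q_n$ at suitable $x,y\in H_{\a}(\tilde K_n)$, contradicting Hölder continuity of $g$. The real work is verifying this uniformity over all $\theta$ simultaneously, which is the delicate technical point specific to the $\Z^d$ setting.
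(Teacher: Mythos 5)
Your overall construction mirrors the paper's proof of Proposition~\ref{prop:Main2}: AbC diffeomorphisms with a shared conjugation map $H_{\a ,n}$ and rotation numbers with common denominator $q_n$, yielding commuting limits $T^{(i)}_{\a}=H_{\a}\circ R_{\alpha^{(i)}}\circ H^{-1}_{\a}$. However, you misplace where the work is and omit the step that actually requires it.

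Concerning (R2): you correctly note that any homeomorphism $g$ conjugating $\Phi(\a)$ to $\Phi(\b)$ must have the form $g=H_{\b}\circ R_{\theta}\circ H^{-1}_{\a}$, because it conjugates $T^{(1)}_{\a}$ to $T^{(1)}_{\b}$, and that such a $g$ then automatically conjugates every generator. But that observation is precisely the reduction of (R2) for $\Z^d$ to (R2) for a single diffeomorphism: the collection of candidate conjugators is the same in rank $1$ as in rank $d$, so the Hölder blow-up computation from the proof of Proposition~\ref{prop:Main}, run with this shared conjugation structure, disposes of them all at once. Nothing new appears in the $\Z^d$ case; there is no ``delicate technical point specific to the $\Z^d$ setting,'' and the proposed surgery on $\hat h_{t_n}$ (adding a redundant affine regime offset by $1/(2q_n)$) is not needed here. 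If the $\theta$-dependence of the blow-up argument bothers you, it concerns Proposition~\ref{prop:Main} itself and not its extension to higher-rank actions.

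What your write-up does not actually do is prove freeness, i.e., the linear independence of $1,\alpha^{(1)},\ldots,\alpha^{(d)}$ over the integers. You state it as a design requirement but give no construction and no verification. This is not automatic: the $\alpha^{(i)}$ arise as infinite sums $\sum_{n} b_{i,n}/q_{n+1}$ with a common denominator sequence, and under the Liouville-fast growth forced on $q_n$ by Lemma~\ref{lem:conv}, integer relations among such sums can persist in the limit. This is the genuine content of the higher-rank extension and is where the paper spends its effort: it engineers the rationals ($q_1$ and $l_n$ powers of $2$, $q_{n+1}=l_n q_n^2$, $\alpha^{(i)}_{n+1}-\alpha^{(i)}_n=3^{(i-1)n}/q_{n+1}$) so that Hancl's criterion, Lemma~\ref{lem:Indep}, applies with $a_{i,n}=q_{n+1}$ and $b_{i,n}=3^{(i-1)n}$, and then verifies its divisibility and growth hypotheses. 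Without an argument of this kind your $\Z^d$ action may fail to be free, and the proposal is incomplete.
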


By our strategy of proof from Section \ref{sec:strategy}, this theorem follows from the subsequent criterion.

\begin{prop} \label{prop:Main2}
	Let $\mathcal{A}$ be the space of free $\Z^d$ actions by orientation-preserving $C^{\infty}$ diffeomorphisms of the circle. There is a  continuous one-to-one map 
	\[
	\Phi: \{0,1\}^{\N} \to \mathcal{A}
	\]
	such that for any two sequences $\a=(a_n)_{n\in \N}$ and $\b =(b_n)_{n\in \N}$ the following properties hold:
	\begin{enumerate}
		\item[(R1)] If there is $N \in \N$ such that $a_n =b_n$ for every $n \geq N$, then the $C^{\infty}$-actions $\Phi(\a)$ and $\Phi(\b)$ are $C^{\infty}$-conjugate.
		\item[(R2)] If there are infinitely many $n \in \N$ with $a_n \neq b_n$, then the $C^{\infty}$-actions $\Phi(\a)$ and $\Phi(\b)$ are not Hölder-conjugate.
	\end{enumerate}
\end{prop}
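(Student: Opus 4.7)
The plan is to adapt the Approximation by Conjugation construction of Section \ref{sec:constr} to produce, from each sequence $\a \in \{0,1\}^{\N}$, a $d$-tuple of pairwise commuting $C^{\infty}$-diffeomorphisms generating a free $\Z^d$ action. At step $n$ we select $d$ rationals $\alpha_{n+1}^{(i)} = p_{n+1}^{(i)}/q_{n+1}$, $i = 1, \ldots, d$, sharing the \emph{same} denominator $q_{n+1}$, and keep the conjugation maps $h_{\a,n}$ and $H_{\a,n} = H_{\a,n-1} \circ h_{\a,n}$ verbatim as in Section \ref{subsec:conj}. Setting
\[
T_{\a,n}^{(i)} = H_{\a,n} \circ R_{\alpha_{n+1}^{(i)}} \circ H_{\a,n}^{-1},
\]
the equivariance \eqref{eq:equiv} combined with the fact that each $\alpha_{n+1}^{(i)}$ is an integer multiple of $1/q_{n+1}$ (so $R_{\alpha_{n+1}^{(i)}}$ commutes with every $h_{\a,m}$ for $m \geq n+1$) guarantees that the $T_{\a,n}^{(i)}$ pairwise commute, a property preserved under $C^0$-convergence.

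The sequence $(q_n, p_n^{(1)}, \ldots, p_n^{(d)})$ is chosen inductively so that (a) the Liouville-type bound \eqref{eq:alpha} holds at step $n$ simultaneously for all $i$, which by the Cauchy argument of Lemma \ref{lem:conv} ensures $\diff^{\infty}$-convergence $T_{\a,n}^{(i)} \to T_{\a}^{(i)}$ for every $i$ and every $\a \in \{0,1\}^{\N}$; (b) the limits $\alpha^{(i)} = \lim_n p_n^{(i)}/q_n$ together with $1$ are $\Q$-linearly independent, so that the induced $\Z^d$ action is free; and (c) $\alpha^{(1)} = \alpha$ coincides with a prescribed Liouville number, so the first generator is captured by the framework of Proposition \ref{prop:Main}. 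Setting $\Phi(\a) := (T_{\a}^{(1)}, \ldots, T_{\a}^{(d)})$, continuity of $\Phi$ follows coordinate-wise from Lemma \ref{lem:conv}(3), and injectivity follows from that of $\Psi$ applied to the first coordinate.

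Property (R1) is then immediate: if $a_n = b_n$ for all $n \geq N$ then $h_{\a,n} = h_{\b,n}$ for $n \geq N$, and a short telescoping calculation shows $G := H_{\b,N} \circ H_{\a,N}^{-1} \in \mathcal{H}^{\infty}$ satisfies $G \circ T_{\a,n}^{(i)} \circ G^{-1} = T_{\b,n}^{(i)}$ for every $n \geq N$ and every $i$; passing to the limit yields a single $C^{\infty}$-conjugacy of the entire $\Z^d$ action. For property (R2), any $\mathcal{D}$-conjugacy of $\Phi(\a)$ and $\Phi(\b)$ is \emph{a fortiori} a $\mathcal{D}$-conjugacy of the first generators $T_{\a}^{(1)}$ and $T_{\b}^{(1)}$, which are produced by exactly the same AbC recipe as in Proposition \ref{prop:Main} with Liouville rotation number $\alpha$; the explicit slope computation in the proof of (R2) of Proposition \ref{prop:Main} then applies verbatim and rules out any such conjugacy being $d$-Hölder for $d \in (0,1)$.

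The main subtlety lies in the simultaneous approximation step (a): one needs a single denominator $q_n$ that is good for all $d$ rotations at once. Rather than fixing the $\alpha^{(i)}$ in advance, I build them alongside the $q_n$: at step $n$, having taken $q_n$ large enough to dominate the polynomial threshold imposed by $\vertiii{H_{\a,n}}_{n+1}^{n+1} \leq \hat{C}_n q_n^{(n+1) N(n,n+1)}$ uniformly in $\a$, I pick integers $p_n^{(i)}$ compatible both with the freeness condition (b) --- avoidable since the excluded $\Q$-linear relations from earlier stages form a finite, lower-dimensional set of $d$-tuples modulo $q_n$ --- and with the tightening gap $|\alpha_n^{(i)} - \alpha_{n-1}^{(i)}| \leq \varepsilon_n$ for a suitable summable $\varepsilon_n$. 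The limits $\alpha^{(i)}$ are then automatically Liouville with common denominators $q_n$, and the entire convergence machinery of Lemma \ref{lem:conv} extends without modification to all $d$ coordinates.
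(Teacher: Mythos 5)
Your overall architecture --- same conjugation maps $H_{\a,n}$, $d$ rotation numbers sharing a common sequence of denominators $q_n$, convergence by the same Liouville-type bound applied simultaneously across $i$, and reducing (R1), (R2) to the first generator --- is the same as the paper's. The gap is in how you secure \emph{freeness}, i.e.\ $\Q$-linear independence of $\alpha^{(1)},\dots,\alpha^{(d)},1$. You assert this can be arranged ``alongside the $q_n$'' by noting that ``the excluded $\Q$-linear relations from earlier stages form a finite, lower-dimensional set of $d$-tuples modulo $q_n$'' and picking $p_n^{(i)}$ outside it. But a nontrivial integer relation $c_0 + \sum_i c_i \alpha^{(i)} = 0$ is a condition on the \emph{limits} $\alpha^{(i)} = \sum_n \delta_n^{(i)}/q_{n+1}$, which are not determined at any finite stage $n$; choosing $p_n^{(i)}$ to ``avoid'' a relation at stage $n$ does not preclude later increments from restoring exactly that relation. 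Any finite-stage avoidance argument must be accompanied by a quantitative control on the tails (how fast do the later contributions decay relative to the ``gap'' created at step $n$ away from the dangerous relations with $\max|c_i|\le n$, etc.), and your write-up supplies none of that. The paper handles this by an entirely different route: it picks the increments $\alpha^{(i)}_{n+1}-\alpha^{(i)}_n = 3^{(i-1)n}/(l_nq_n^2)$ with $q_{n+1}=l_nq_n^2$ a power of $2$, and then invokes a ready-made number-theoretic criterion (Hancl's theorem, Lemma \ref{lem:Indep}) whose growth/ratio hypotheses are checked with $a_{i,n}=q_{n+1}$ and $b_{i,n}=3^{(i-1)n}$. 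That is the substantive ingredient your proposal is missing.

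A secondary point: you add the requirement $\alpha^{(1)}=\alpha$ for a prescribed Liouville $\alpha$. The proposition does not demand this, and imposing it is in tension with the rigid denominator constraint the paper uses (all $q_n$ powers of $2$) to make Hancl's hypotheses verifiable. You should drop that clause; the Hölder obstruction in (R2) does not depend on $\alpha^{(1)}$ matching the $\alpha$ of Proposition \ref{prop:Main}, only on the shapes and slopes of the $h_{\a,n}$ and the growth of $q_n$, which are identical.
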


In order to build the smooth $\Z^d$ actions, we construct generating diffeomorphisms by a slight modification to the constructions in Section \ref{sec:constr}. In particular, we have to arrange for commutativity of the generators and freeness of the action. For the latter one, the following number-theoretical lemma from \cite{Ha} will prove useful.

\begin{lemma}[\cite{Ha}, Theorem 2.1] \label{lem:Indep}
	Let $d \in \Z^+$, $\epsilon >0$ and $(a_{i,n})_{n\in \N}$ for $i=1,\dots , d$ be sequences of positive integers such that
	\begin{enumerate}
		\item $a_{1,n}$ divides $a_{1,n+1}$ and $\frac{a_{1,n+1}}{a_{1,n}} \geq 2^{(d+1)^{n-1}}$
		\item $b_{i,n} < 2^{(d+1)^{n-(\sqrt{2}+\epsilon)\sqrt{n}}}$ for $i=1,\dots , d$
		\item $\lim_{n\to \infty} \frac{a_{i,n}b_{j,n}}{b_{i,n}a_{j,n}}=0$ for all $i,j \in \{1,\dots , d\}$, $i>j$
		\item $a_{i,n}2^{-(d+1)^{n-(\sqrt{2}+\epsilon)\sqrt{n}}}<a_{1,n} < a_{i,n}2^{(d+1)^{n-(\sqrt{2}+\epsilon)\sqrt{n}}}$ for $i=1,\dots , d$
	\end{enumerate}
hold for every sufficiently large $n \in \Z^+$. Then the numbers $\sum^{\infty}_{n=1} \frac{b_{1,n}}{a_{1,n}}, \dots , \sum^{\infty}_{n=1} \frac{b_{d,n}}{a_{d,n}}$ and $1$ are linearly independent over the integers.
\end{lemma}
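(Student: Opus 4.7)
The plan is a Liouville-type argument in which the partial sums $H_{i,N}=\sum_{n=1}^{N}b_{i,n}/a_{i,n}$ serve as simultaneous rational approximations of $S_i:=\sum_{n=1}^{\infty}b_{i,n}/a_{i,n}$, and the super-exponential growth imposed by condition~(1) forces these approximations to be so good relative to their common denominator that $1,S_1,\dots,S_d$ cannot satisfy any non-trivial $\Z$-linear relation. I would argue by contradiction: assume $c_0,c_1,\dots,c_d\in\Z$, not all zero, with $c_0+\sum_{i=1}^{d}c_iS_i=0$. For each $N$ split each series as $S_i=H_{i,N}+T_{i,N}$, where $T_{i,N}=\sum_{n>N}b_{i,n}/a_{i,n}$, so that
\[
c_0+\sum_{i=1}^{d}c_iH_{i,N}\;=\;-\sum_{i=1}^{d}c_iT_{i,N}.
\]

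The first stage is a quantitative tail estimate. Writing $\beta(n):=(d+1)^{n-(\sqrt{2}+\varepsilon)\sqrt{n}}$, conditions~(2) and~(4) give $b_{i,n}/a_{i,n}\le 2^{2\beta(n)}/a_{1,n}$, while condition~(1) makes the resulting geometric-like series telescope, so that for $N$ large
\[
\Bigl|\sum_{i=1}^{d}c_iT_{i,N}\Bigr|\;\le\;\frac{C\cdot 2^{2\beta(N+1)}}{a_{1,N+1}}.
\]
The second stage is to clear denominators: one chooses the integer $D_N:=\operatorname{lcm}\{a_{i,n}:1\le i\le d,\,1\le n\le N\}$, for which $D_N(c_0+\sum_i c_iH_{i,N})\in\Z$. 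Using condition~(4) to say every $a_{i,n}$ differs from $a_{1,n}$ by only a factor of $2^{\beta(n)}$, together with the telescoping divisibility of condition~(1), one bounds $D_N$ by $a_{1,N}$ times a product of factors $2^{\beta(n)}$ summed in $n\le N$ and $i\le d$. The exponent $(\sqrt{2}+\varepsilon)\sqrt{n}$ is tuned so that the total inflation is much smaller than $a_{1,N+1}/a_{1,N}\ge 2^{(d+1)^{N}}$; consequently $D_N\cdot|\sum_ic_iT_{i,N}|\to 0$. Being an integer, $D_N\sum_ic_iT_{i,N}$ must therefore vanish for all sufficiently large $N$.

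From $\sum_ic_iT_{i,N}=0$ for all large $N$, subtracting consecutive equations gives $\sum_{i=1}^{d}c_i\,b_{i,N}/a_{i,N}=0$ for every large $N$. I would then close the argument using condition~(3): multiplying by $a_{1,N}\cdots a_{d,N}/\bigl(\text{pivot}\bigr)$ and comparing magnitudes, one observes that for $i>j$ the quotient $(b_{i,N}/a_{i,N})/(b_{j,N}/a_{j,N})=\bigl(b_{i,N}a_{j,N}\bigr)/\bigl(b_{j,N}a_{i,N}\bigr)$ diverges; so in $\sum_i c_i\,b_{i,N}/a_{i,N}=0$ the index-$d$ term asymptotically dominates, which forces $c_d=0$, and iteration yields $c_{d-1}=\dots=c_1=0$, hence $c_0=0$, a contradiction.

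The main obstacle I expect is the denominator bound in the second stage: one must show that the inflation of $D_N$ above $a_{1,N}$, produced by the $d-1$ auxiliary sequences $(a_{i,n})_{n\le N}$ which are \emph{not} assumed to satisfy any divisibility, is still strictly smaller than the one-step jump $a_{1,N+1}/a_{1,N}\ge 2^{(d+1)^N}$. This is precisely the role of the carefully calibrated exponent $n-(\sqrt{2}+\varepsilon)\sqrt{n}$ in conditions~(2) and~(4): the cumulative cost of the $d$ sequences up to level $N$ is of order $d\cdot N\cdot 2^{\beta(N)}$, which is dwarfed by the single-step growth $2^{(d+1)^N}$ precisely because $\beta(N)/(d+1)^N\to 0$. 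Verifying this inequality cleanly, and handling the fact that the $a_{i,n}$ are integers (so $\operatorname{lcm}$ rather than product can be used to avoid losing a factor of $a_{1,N}^{d-1}$), is the delicate bookkeeping step that I would need to work out in full detail.
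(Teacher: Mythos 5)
First, a remark on the comparison itself: the paper does not prove this statement at all --- it is quoted verbatim as Theorem 2.1 of Hancl's paper [Ha99] and used as a black box --- so your proposal can only be judged on its own merits, not against an internal argument.

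On its merits, there is a genuine gap, and it sits exactly where you deferred the ``delicate bookkeeping'': the bound on $D_N=\operatorname{lcm}\{a_{i,n}:1\le i\le d,\ n\le N\}$. Condition (4) controls only the \emph{size} ratio $a_{i,n}/a_{1,n}$; it says nothing about divisibility or common factors among the $a_{i,n}$ with $i\ge 2$, and no such hypothesis is available (only the first sequence has the divisibility property (1)). Hence $\operatorname{lcm}$ cannot be bounded by ``$a_{1,N}$ times small correction factors $2^{\beta(n)}$'': in the worst case the $a_{i,n}$ are pairwise coprime and the lcm is essentially the full product, which already at level $N$ contains a factor of order $a_{1,N}^{\,d}$. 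Moreover condition (1) is only a \emph{lower} bound on the growth of $a_{1,n}$, so $a_{1,N+1}$ may equal exactly $a_{1,N}\cdot 2^{(d+1)^{N-1}}$ while $a_{1,N}$ itself is astronomically large (e.g.\ tower-like growth is permitted). Concretely, for $d=2$ take $a_{1,n}$ growing much faster than $2^{(d+1)^{n-1}}$, $a_{2,n}$ coprime to $a_{1,n}$ with $a_{2,n}\approx a_{1,n}$ (allowed by (4)), and $a_{1,N+1}=a_{1,N}2^{(d+1)^{N-1}}$; then $D_N\ge a_{1,N}a_{2,N}\gg a_{1,N+1}$, whereas your tail is only of size $2^{O(\beta(N+1))}/a_{1,N+1}$, so $D_N\cdot\abs{\sum_i c_iT_{i,N}}$ tends to infinity rather than to $0$ and the integrality argument yields nothing. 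So this is not bookkeeping to be tuned by the exponent $n-(\sqrt2+\epsilon)\sqrt n$; it is the reason the naive simultaneous Liouville argument cannot prove the lemma in the stated generality, and it is precisely why Hancl's theorem (with its calibrated exponent and the ordering condition (3)) requires a more involved proof than clearing all denominators at once.

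Two smaller points. The later stages of your outline are fine as conditional steps: the tail estimate $\abs{\sum_i c_iT_{i,N}}\le C\cdot 2^{2\beta(N+1)}/a_{1,N+1}$ does follow from (1), (2), (4), and \emph{if} one knew $\sum_i c_iT_{i,N}=0$ for all large $N$, then differencing plus condition (3) (which says the index-$d$ term dominates) correctly forces $c_d=\dots=c_1=0$ and hence $c_0=0$. Also, your scheme does work in the special situation where all the sequences share the divisibility structure of the first one --- in particular when $a_{1,n}=\dots=a_{d,n}=q_{n+1}$, which is the only case the paper actually uses in Section 6, since there $D_N=a_{1,N}$ and $D_N\cdot\abs{\sum_i c_iT_{i,N}}\lesssim 2^{2\beta(N+1)-(d+1)^{N}}\to 0$. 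But as a proof of the lemma as stated, the argument is incomplete at its central step.
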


\begin{proof}[Proof of Proposition \ref{prop:Main2}]
	We construct the $d$ geneators $T^{(i)}_{\a}$, $i=1,\dots , d$, of the $\Z^d$ action $\Phi(\a)$ as limits of AbC diffeomorphisms $T^{(i)}_{\a , n}=H_{\a , n} \circ R_{\alpha^{(i)}_{n+1}} \circ H^{-1}_{\a , n}$, where the conjugation maps $H_{\a,n}=H_{\a , n-1} \circ h_{\a , n}$ are constructed as in Section \ref{subsec:conj} and $(\alpha^{(i)}_n)_{n \in \N}$ are sequences of rational numbers $\alpha^{(i)}_n = \frac{p^{(i)}_n}{q_n}$ with $p^{(i)}_n$ and $q_n$ relatively prime.
	
	For a start, we let $H_{\a , 0}=\operatorname{id}$, $q_1$ be a power of $2$ and for each $i=1,\dots , d$ we let $p^{(i)}_1$ be an odd integer. In the induction step from $n-1$ to $n$ we construct the conjugation map $h_{\a , n}$ as in Section \ref{subsec:conj} using the number $q_n$. Then we choose $l_n$ as a sufficiently large  power of $2$ such that
	\begin{equation}\label{eq:l}
		l_n > 4^{dn}\cdot C_n \cdot \vertiii{H_{\a , n}}^{n+1}_{n+1}
	\end{equation}
for all sequences $\a \in \{0,1\}^{\N}$, where $C_n$ is the constant from Lemma \ref{lem:konj}. We proceed by defining the rational numbers
\begin{equation} \label{eq:dAlpha}
	\alpha^{(i)}_{n+1} \coloneqq \alpha^{(i)}_n + \frac{3^{(i-1)\cdot n}}{l_n q^2_n} = \frac{p^{(i)}_nl_nq^2_n+3^{(i-1)\cdot n}}{l_n q^2_n}
\end{equation}
for each $i=1,\dots , d$. Since $l_n$ and $q_n$ are powers of $2$, we note that $q_{n+1}=l_nq^2_n$ is a power of $2$ and $p^{(i)}_{n+1}$ is relatively prime to $q_{n+1}$. Furthermore, we apply Lemma \ref{lem:konj} to get the following estimate for each $i=1,\dots , d$ and every $\a \in \{0,1\}^{\N}$:
\begin{align*}
	d_n\left( T^{(i)}_{\a , n} , T^{(i)}_{\a , n-1} \right) 
	& = d_n \left( H_{\a , n} \circ R_{\alpha^{(i)}_{n+1}} \circ H^{-1}_{\a , n},   H_{\a , n-1} \circ R_{\alpha^{(i)}_{n}} \circ H^{-1}_{\a , n-1} \right) \\
	& = d_n \left( H_{\a , n} \circ R_{\alpha^{(i)}_{n+1}} \circ H^{-1}_{\a , n},   H_{\a , n-1} \circ R_{\frac{p^{(i)}_n}{q_n}} \circ h_{\a , n} \circ h^{-1}_{\a , n} \circ H^{-1}_{\a , n-1} \right) \\
	& = d_n \left( H_{\a , n} \circ R_{\alpha^{(i)}_{n+1}} \circ H^{-1}_{\a , n},   H_{\a , n-1} \circ h_{\a , n} \circ R_{\frac{p^{(i)}_n}{q_n}}  \circ h^{-1}_{\a , n} \circ H^{-1}_{\a , n-1} \right) \\
	& = d_n \left( H_{\a , n} \circ R_{\alpha^{(i)}_{n+1}} \circ H^{-1}_{\a , n},   H_{\a , n} \circ R_{\alpha^{(i)}_{n}} \circ H^{-1}_{\a , n} \right) \\
	& \leq C_n \cdot \vertiii{H_{\a , n}}^{n+1}_{n+1} \cdot \abs{\alpha^{(i)}_{n+1} - \alpha^{(i)}_n} \\
	& < \frac{3^{(d-1)n}}{4^{dn} \cdot q^2_n},
\end{align*}
where we used equations \eqref{eq:l} and \eqref{eq:dAlpha} in the last step. As in the proof of Lemma \ref{lem:conv}, this estimate allows us to conclude convergence of the sequence $(T^{(i)}_{\a , n})_{n \in \N}$ to a $C^{\infty}$-diffeomorphism $T^{(i)}_{\a}$. By Lemma \ref{lem:convH} we again obtain convergence of $(H_{\a , n})_{n \in \N}$ to a homeomorphism $H_{\a}$. Hence, for each $i=1,\dots , d$ we get $T^{(i)}_{\a} = H_{\a} \circ R_{\alpha^{(i)}} \circ H^{-1}_{\a}$, where $\alpha^{(i)}$ is the limit of the sequence $(\alpha^{(i)}_n)_{n \in \N}$. In particular, we have $T^{(i)}_{\a} \circ T^{(j)}_{\a} = T^{(j)}_{\a} \circ T^{(i)}_{\a}$ for all $i,j \in \{1,\dots , d\}$. Thus, the $C^{\infty}$-diffeomormphisms $T^{(1)}_{\a}, \dots , T^{(d)}_{\a}$ generate a smooth $\Z^d$ action. The properties (R1) and (R2) follow from the corresponding properties in Proposition \ref{prop:Main}.

Finally, we apply Lemma \ref{lem:Indep} with $a_{i,n}=q_{n+1}$ and $b_{i,n}= 3^{(i-1)n}$ to verify that the numbers $\alpha^{(1)},\dots , \alpha^{(d)}$ and $1$ are linearly independent over the integers. This yields that the action $\Phi(\a)$ generated by $T^{(1)}_{\a}, \dots , T^{(d)}_{\a}$ is a free action. 
\end{proof}

	\paragraph{Acknowledgments:} The author would like to thank Matthew Foreman, Marlies Gerber, Federico Rodriguez Hertz, and Jean-Paul Thouvenot for helpful comments. In particular, Foreman's preprint \cite{Fsurvey} inspired this project and Rodriguez Hertz suggested to look into consequences for higher rank actions. The author also thanks organizers and participants of the Banff workshop ``Interactions between Descriptive Set Theory and Smooth Dynamics'' for a cooperative atmosphere and helpful discussions.

\end{document}